\title[ ]
{Pseudo B-Fredholm Operators and Spectral Theory  }
\author[ A. TAJMOUATI, M. AMOUCH and M.KARMOUNI]
{  A. TAJMOUATI, M. AMOUCH, M.KARMOUNI}
\address{A. TAJMOUATI and M.KARMOUNI \newline
 Sidi Mohamed Ben Abdellah
 Univeristy, 
 Faculty of Sciences Dhar Al Mahraz, Laboratory of Mathematical Analysis and Applications Fez, Morocco.}
\email{abdelaziz.tajmouati@usmba.ac.ma}
\email{mohammed.karmouni@usmba.ac.ma}
\address{Mohamed AMOUCH \newline
Department of Mathematics
University Chouaib Doukkali,
Faculty of Sciences, Eljadida.
24000, Eljadida, Morocco.}
\email{mohamed.amouch@gmail.com}
\subjclass[2000]{47A53, 47B10}
\keywords{Fredholm operators, pseudo Fredholm operators, pseudo B-Fredholm operator, pseudo B-Weyl spectrum}
\newtheorem{theorem}{Theorem}[section]
\newtheorem{remark}{Remark}
\newtheorem{lemma}{Lemma}[section]
\newtheorem{proposition}{Proposition}[section]
\newtheorem{corollary}{Corollary}[section]
\newtheorem{example}{Example}
\begin{document}

\maketitle

\begin{abstract}
 In this paper, we show that every pseudo B-Fredholm operator is a pseudo Fredholm operator. Afterwards,
we prove that the pseudo B-Weyl spectrum  is empty if and only if the pseudo B-Fredholm spectrum is empty. Also,
we study a symmetric difference between some parts of the spectrum.
\end{abstract}

\section{Introduction and Preliminaries}

Throughout, $X$ denotes a complex Banach space and $\mathcal{B}(X)$ denotes the Banach algebra of all bounded linear
operators on $X$, we denote by $T^*$,  $R(T)$, $ R^{\infty}(T)=\bigcap_{n\geq0}R(T^n)$,  $K(T)$, $ H_0(T)$,  $\rho(T)$, $\sigma_{ap}(T)$, $\sigma_{su}(T)$, $\sigma(T)$,
 respectively the adjoint, the range, the hyper-range, the analytic core, the quasinilpotent part, the resolvent set, the  approximate point spectrum, the surjectivity spectrum and  the spectrum of $T$.

Next, let $T\in\mathcal{B}(X)$, $T$ is said to have the single
valued extension property at $\lambda_{0}\in\mathbb{C}$ (SVEP) if
for every  open neighbourhood   $U\subseteq \mathbb{C}$ of
$\lambda_{0}$, the only  analytic function  $f: U\longrightarrow
X$ which satisfies
 the equation $(T-zI)f(z)=0$ for all $z\in U$ is the function $f\equiv 0$. $T$ is said to have the SVEP if $T$ has the SVEP for
 every $\lambda\in\mathbb{C}$. Obviously, every operator $T\in\mathcal{B}(X)$ has the SVEP at every $\lambda\in\rho(T)$, hence $T$ and $T^*$ have the SVEP at every point of the boundary  $\partial( \sigma(T))$ of the spectrum.

 A bounded linear operator is called an upper semi- Fredholm (resp, lower semi Fredholm) if $dim N(T)<\infty \mbox{ and  } R(T) \mbox{ closed }$
(resp, $codim R(T) <\infty$). $T$ is semi Fredholm if it is a lower or upper. The index of a semi Fredholm operator $T$ is defined by $ind(T)= dim N(T)- codim R(T)$.

$T$ is a Fredholm operator if is a lower and upper semi Fredholm, and is called a Weyl operator if it is a Fredholm of index zero.\\
The essential  and Weyl spectrum  of $T$  are closed and  defined  by : $$\sigma_{e}(T)=\{\lambda\in \mathbb{C}:\>\> T-\lambda \mbox{  is not a  Fredholm operator}\}$$
$$\sigma_{W}(T)=\{\lambda\in \mathbb{C}:\>\> T-\lambda\>\> \mbox{ is not  a Weyl operator}\}.$$

Recall that $T\in\mathcal{B}(X)$ is said to be Kato operator or
semi-regular,  if $R(T)$ is closed
and $N(T)\subseteq R^{\infty}(T)$. Denote by $\rho_{K}(T)$ :
$\rho_{K}(T)=\{\lambda\in\mathbb{C}: T-\lambda I\mbox{  is Kato }
\}$ the Kato resolvent  and
$\sigma_{K}(T)=\mathbb{C}\backslash\rho_{K}(T)$ the Kato spectrum
of $T$. It is well known that $\rho_{K}(T)$ is an open subset of
$\mathbb{C}$.\\\\
Let $T\in \mathcal{B}(X)$ such that $X=X_{1}\oplus X_{2}$,  $T=T_{1}\oplus T_{2}$ :\\

$T$ is a B-Fredholm operator if $T_{1}$ is Fredholm and $T_{2}$ is nilpotent. The  B-Fredholm spectrum defined by: $$\sigma_{BF}(T)=\{\lambda\in \mathbb{C}:\>\> T-\lambda \mbox{ is not  B-Fredholm} \}.$$
  This class of operators, introduced and studied by Berkani et al. in a series of papers
which extends the class of semi-Fredholm operators. In the beginning this class was defined by:
 An operator $T\in \mathcal{B}(X)$,  is said to be B-Fredholm,  if for some integer $n\geq0$ the range $R(T^n)$ is closed and $T_n$, the restriction of $T$ to $R(T^n)$ is a Fredholm operator. $T$ is said to be a B-Weyl operator if $T_n$ is a Fredholm operator of index zero which is also equivalent to  the fact that  $T_{1}$  is a Weyl operator and $T_{2}$  is nilpotent. The  B-Weyl spectrum defined by $$\sigma_{BW}(T)=\{\lambda\in \mathbb{C}:\>\> T-\lambda  \mbox{ is not  B-Weyl}\}.$$
 Note that, Berkani  gave the equivalence onto this two definitions of B-Fredholm operator, see \cite[Theorem 2.7]{B1}.
It is easily seen that every nilpotent operator, as well as any idempotent bounded operator, is B-Fredholm.\\\\
More recently, B-Fredholm and B-Weyl operators were generalized to pseudo B-Fredholm and pseudo B-Weyl \cite{BO}, \cite{ZZ}. Precisely,
$T$ is a pseudo B-Fredholm operator if $T_{1}$ is a Fredholm  operator and $T_{2}$ is a quasi-nilpotent operators. The  pseudo B-Fredholm spectrum is defined by $$\sigma_{pBF}(T)=\{\lambda\in \mathbb{C}:\>\> T-\lambda \mbox{ is  not  pseudo B-Fredholm} \}.$$
An operator $T$ is a pseudo B-Weyl operator if $T_{1}$  is a Weyl operator and $T_{2}$ is a  quasi-nilpotent operator. The  pseudo B-Weyl spectrum defined by $$\sigma_{pBW}(T)=\{\lambda\in \mathbb{C}:\>\> T-\lambda \mbox{ is  not   pseudo B-Weyl} \}.$$
$\sigma_{pBW}(T)$ and $\sigma_{pBF}(T)$ is not necessarily non empty.
For example, the quasi nilpotent operator has empty pseudo B-Weyl  and
 pseudo B-Fredholm spectrum. Evidently $\sigma_{pBF}(T)\subset \sigma_{pBW}(T)\subset\sigma(T)$.\\\\
$T$ is a  pseudo-Fredholm operator (or admit generalized  Kato decomposition) if $T_{1}$ is Kato operator  and $T_{2}$ is quasi-nilpotent. The  pseudo-Fredholm spectrum defined by $$\sigma_{GK}(T)=\{\lambda\in \mathbb{C}:\>\> T-\lambda \mbox{ is not  a pseudo -Fredholm} \}$$ Denote by $\rho_{GK}(T)$ :
$\rho_{GK}(T)=\{\lambda\in\mathbb{C}: T-\lambda I\mbox{  is pseudo -Fredholm }\}$.
If we assume in the
definition above that $T_2$ is nilpotent, $T$ is said to be quasi-Fredholm (or admit a Kato  decomposition or  Kato type).
The  quasi-Fredholm spectrum defined by: $$\sigma_{tK}(T)=\{\lambda\in \mathbb{C}:\>\> T-\lambda \mbox{ is not   quasi -Fredholm} \}$$
 The Operators which admit a generalized Kato decomposition was originally introduced by M.Mbekhta \cite{Mb1} in the  Hilbert spaces as a generalization of quasi-Fredholm operators have been introduced by J.P.Labrousse \cite{lab} and the semi-Fredholm operators.\\\\
 In \cite{B1}, Berkani  showed that a B-Fredholm operator is a quasi-Fredholm, see \cite[Proposition 4]{B1}. This result lead to ask the following question: If  every pseudo B-Fredholm operator is a pseudo-Fredholm operator ?\\

We organize our paper in the following way: In the next section we prove  that  every pseudo B-Fredholm operator is a pseudo-Fredholm operator.  Also we study the relationships between the class of  pseudo B-Fredholm  and  other class of operator.
 In section 3, we shall study the component of pseudo B-Fredholm resolvent $\rho_{pBF}(T)$,
  to obtain a classification of the components
 by using the constancy of the subspaces quasi-nilpotent part and analytic core, some applications are  also given. Finally, in section 4, we  show that
the symmetric difference $\sigma_{K}(T)\Delta\sigma_{pBF}(T)$   is at most countable.

\section{ The class of Pseudo B-Fredholm Operators }

In the following theorem we prove that every  pseudo B-Fredholm operator is pseudo Fredholm.
\begin{theorem}\label{bb}
Let $T\in\mathcal{B}(X)$. If  $T$ is pseudo B-Fredholm, then $T$  is pseudo Fredholm.
\end{theorem}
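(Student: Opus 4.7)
The plan is to start from the given pseudo B-Fredholm decomposition $X=X_{1}\oplus X_{2}$, $T=T_{1}\oplus T_{2}$ with $T_{1}$ Fredholm on $X_{1}$ and $T_{2}$ quasi-nilpotent on $X_{2}$, and to refine the first summand using the classical Kato decomposition for semi-Fredholm operators. Recall that every semi-Fredholm operator (in particular every Fredholm operator) admits a Kato decomposition: there exist closed $T_{1}$-invariant subspaces $Y_{1}, Y_{2}\subseteq X_{1}$ with $X_{1}=Y_{1}\oplus Y_{2}$ such that $T_{1}|_{Y_{1}}$ is semi-regular (Kato) and $T_{1}|_{Y_{2}}$ is nilpotent on the finite-dimensional space $Y_{2}$. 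This is the tool that turns the ``Fredholm plus quasi-nilpotent'' information into ``Kato plus quasi-nilpotent'' information.

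Having these two decompositions in hand, I would regroup the summands as
\[
X \;=\; Y_{1}\,\oplus\,(Y_{2}\oplus X_{2}), \qquad T \;=\; S_{1}\oplus S_{2},
\]
where $S_{1}:=T_{1}|_{Y_{1}}$ and $S_{2}:=T_{1}|_{Y_{2}}\oplus T_{2}$. By construction $S_{1}$ is Kato. To obtain that $T$ is pseudo Fredholm, I then need to verify that $S_{2}$ is quasi-nilpotent. Since $T_{1}|_{Y_{2}}$ is nilpotent (hence quasi-nilpotent) and $T_{2}$ is quasi-nilpotent, and the two act on complementary summands, the spectrum of their direct sum equals the union of their spectra, which is $\{0\}$. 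Thus $S_{2}$ is quasi-nilpotent, and the decomposition $T=S_{1}\oplus S_{2}$ exhibits $T$ as a pseudo Fredholm operator.

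The only non-routine step is invoking the Kato-type decomposition for the Fredholm operator $T_{1}$; this is where all the work is hidden, but it is a well-established structural theorem (cf.\ the results of Labrousse and Mbekhta alluded to in the introduction). The last remaining verification, that the direct sum of a nilpotent and a quasi-nilpotent operator on complementary subspaces is quasi-nilpotent, is immediate from the spectral mapping for block-diagonal operators. Altogether the argument is short once the semi-Fredholm Kato decomposition is accepted, and I expect no further obstacles.
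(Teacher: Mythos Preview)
Your proposal is correct and follows essentially the same route as the paper: apply the Kato-type decomposition to the Fredholm summand $T_{1}$ and regroup the nilpotent piece with the quasi-nilpotent summand $T_{2}$. The paper's proof is identical in structure, though it does not spell out the spectral argument for why the direct sum of a nilpotent and a quasi-nilpotent operator is quasi-nilpotent, which you do.
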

\begin{proof}
Let $T\in\mathcal{B}(X)$. If $T$ is pseudo B-Fredholm operator, then there exists a subsets $M$ and $N$  of $X$ such that $X=M\oplus N$ and $T=T_1\oplus T_2$ with  $T_1=T_{|M}$ is a Fredholm operator and $T_2=T_{|N}$ is a quasi nilpotent. Since
$T_1$ is  Fredholm then $T$ admits a Kato decomposition, hence  there exists  $M'$, $M''$   closed subsets of $M$ such that $M=M'\oplus M''$,  $T_1=T'_1\oplus T^{''}_1$ with $T^{'}_1=T_{1 |M'}$ is a Kato operator and $T^{''}_1=T_{1 |M''}'$ is  nilpotent. Then $X=M'\oplus M^{''}\oplus N$, and $T=S\oplus R$ where $S=T'_1$ is a Kato operator and $R=T''_1\oplus T_2$ is a quasi-nilpotent operator, hence $T$ is a pseudo Fredholm operator.
\end{proof}

\noindent The following example shows that the class of pseudo  B-Fredholm operator is a proper
subclass of pseudo Fredholm operator.
\begin{example}
 Consider  the example given by  M\"{u}ller  in \cite{Mu}\\
Let $H$ be the Hilbert space with an orthonormal basis $(e_{i,j})$, where $i$ and $j$ are integers such that $ij\leq 0$.
Define operator $T\in \mathcal{B}(H)$ by :

$$
Te_{i,j}=
\left\{
\begin{array}{rl}
0 &  \mbox{ if }  i=0, j>0  \\
e_{i+1,j}  & \mbox{ Otherewise}
\end{array}
\right.
$$
We have $N(T)=\displaystyle\bigvee_{j>0}\{e_{0,j}\}\subset R^{\infty}(T)$ and $R(T)$ is closed, then $T$ is a Kato operator but $T$ is not a Fredholm operator, since $dim N(T)=\infty$.\\
Let $Q$ a quasinilpotent  operator  in $H$ which is not nilpotent  and no commute with $T$,  then $S=T\oplus Q$ is a pseudo Fredholm operator but is not pseudo B-Fredholm operator, hence the class of pseudo  B-Fredholm operator is a proper subclass of pseudo Fredholm operator.
\end{example}
\begin{remark}
In \cite[Remark 2.5]{ZZ} and \cite[Proposition 1.2  ]{W}, If $T$ is a bilateral shift on $l^2(\mathbb{N})$, we have :
\begin{enumerate}
  \item $T$ is  pseudo B-Weyl if and only if  $T$ is Weyl or $T$ is quasi-nilpotent operator.
  \item $T$  is  pseudo Fredholm if and only if  $T$ is semi-regular  or $T$ is quasi-nilpotent operator.
\end{enumerate}
By the same argument we can prove :
\begin{enumerate}
 \item $T$ is  pseudo B-Fredholm if and only if  $T$ is Fredholm or $T$ is quasi-nilpotent operator.
 \item $T$ is  generalized Drazin  if and only if  $T$ is invertible  or $T$ is quasi-nilpotent operator.
\end{enumerate}
\end{remark}

\begin{corollary}
Let $T\in \mathcal{B}(X)$. Then
\begin{center}
    $\sigma_{GK}(T)\subset\sigma_{pBF}(T)\subset\sigma_{pBW}(T)$
\end{center}
\end{corollary}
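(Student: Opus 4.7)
The proof plan is short because both inclusions follow directly from the preceding material, so the task is mainly to unwind the definitions cleanly.

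For the second inclusion $\sigma_{pBF}(T)\subset\sigma_{pBW}(T)$, I would argue by contrapositive on the resolvent sets. If $\lambda\notin\sigma_{pBW}(T)$, then $T-\lambda I$ admits a decomposition $T-\lambda I=T_1\oplus T_2$ with $T_1$ Weyl and $T_2$ quasi-nilpotent. Since every Weyl operator is in particular Fredholm (Weyl means Fredholm of index zero), $T-\lambda I$ satisfies the definition of a pseudo B-Fredholm operator, so $\lambda\notin\sigma_{pBF}(T)$.

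For the first inclusion $\sigma_{GK}(T)\subset\sigma_{pBF}(T)$, the key is to invoke Theorem \ref{bb}, which has just been proved: every pseudo B-Fredholm operator is pseudo Fredholm. Thus, if $\lambda\notin\sigma_{pBF}(T)$, then $T-\lambda I$ is pseudo B-Fredholm, hence pseudo Fredholm by Theorem \ref{bb}, so $\lambda\notin\sigma_{GK}(T)$. Contraposition yields the claimed inclusion.

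There is no real obstacle here; the only conceptual content is the application of Theorem \ref{bb} for the first inclusion, and the elementary observation ``Weyl $\Rightarrow$ Fredholm'' for the second. A two-line proof recalling these two facts and citing Theorem \ref{bb} is sufficient.
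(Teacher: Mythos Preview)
Your proposal is correct and matches the paper's intent: the corollary is stated there without proof, as an immediate consequence of Theorem~\ref{bb} for the first inclusion and of the trivial implication ``Weyl $\Rightarrow$ Fredholm'' (already noted in the introduction) for the second. There is nothing to add.
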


\begin{lemma}\cite{MO}\label{2}
Let $T\in \mathcal{B}(X)$ and let $G$ a connected component of $\rho_{K}(T)$. Then 
\begin{center}
    $G\cap \rho(T)\neq\varnothing\Longrightarrow  G\subset \rho(T)$
\end{center}
\end{lemma}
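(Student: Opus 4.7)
The plan is to exploit connectedness of $G$ by showing that $G\cap\rho(T)$ is a nonempty subset of $G$ which is both open and closed in the relative topology of $G$; it then must coincide with $G$. Nonemptiness is the hypothesis, and openness is immediate because $\rho(T)$ is already open in $\mathbb{C}$, so $G\cap\rho(T)$ is open in $G$.

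The substantive step is closedness. My strategy is to invoke the classical local constancy of nullity and defect on the Kato resolvent. Specifically, one has the following fact, due in essence to Kato: if $T-\lambda_{0}$ is semi-regular, then there is a disc around $\lambda_{0}$ on which $T-\lambda$ remains semi-regular and on which both $\lambda\mapsto\dim N(T-\lambda)$ and $\lambda\mapsto\dim X/R(T-\lambda)$ are constant. Hence these two functions are locally constant on $\rho_{K}(T)$; since $G$ is connected, they are in fact constant on all of $G$.

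Now pick any $\lambda^{*}\in G\cap\rho(T)$. Invertibility of $T-\lambda^{*}$ forces $\dim N(T-\lambda^{*})=\dim X/R(T-\lambda^{*})=0$, and by the constancy just established these two dimensions vanish for every $\lambda\in G$. Combined with closedness of $R(T-\lambda)$ built into the Kato property, $T-\lambda$ is a continuous bijection of $X$ onto $X$ for every $\lambda\in G$, and the open mapping theorem yields $(T-\lambda)^{-1}\in\mathcal{B}(X)$, i.e.\ $\lambda\in\rho(T)$. This shows $G\subset\rho(T)$ as required.

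The main obstacle is establishing the local constancy of $\alpha(T-\lambda)$ and $\beta(T-\lambda)$ on $\rho_{K}(T)$: one cannot simply appeal to semi-Fredholm perturbation theory, because the nullity and defect at a Kato point may be infinite. The correct tool is the Kato decomposition for $T-\lambda_{0}$ together with an analysis of the hyper-range $R^{\infty}(T-\lambda_{0})$, which allows $N(T-\lambda)$ for $\lambda$ near $\lambda_{0}$ to be identified with $N(T-\lambda_{0})\cap R^{\infty}(T-\lambda_{0})$ and a dual statement to be made for the cokernel. This is the Mbekhta-type machinery underlying the reference \cite{MO}.
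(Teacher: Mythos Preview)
The paper supplies no proof of this lemma; it is simply quoted from \cite{MO} as a known result. Your argument is correct and is the standard route to the statement: local constancy of $\dim N(T-\lambda)$ and $\operatorname{codim} R(T-\lambda)$ on $\rho_{K}(T)$, together with connectedness of $G$, forces both quantities to vanish identically on $G$ once they vanish at a single point of $G\cap\rho(T)$, and closedness of the range then gives invertibility. Two minor remarks. First, the ``clopen'' framing you set up is never actually used: your constancy argument delivers $G\subset\rho(T)$ outright rather than merely the relative closedness of $G\cap\rho(T)$, so the opening paragraph is harmless but redundant. Second, in your final paragraph note that on $\rho_{K}(T)$ the operator $T-\lambda_{0}$ is already semi-regular, so $N(T-\lambda_{0})\subset R^{\infty}(T-\lambda_{0})$ and the intersection you write is just $N(T-\lambda_{0})$ itself; the identification you want is simply $N(T-\lambda)\cong N(T-\lambda_{0})$ for $\lambda$ near $\lambda_{0}$, which is indeed the content of the Mbekhta--Ouahab machinery you cite.
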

\begin{lemma}\label{aaa}\cite{W}
Let $T\in \mathcal{B}(X)$.\\
 $\rho_{GK}(T)\backslash \rho_{K}(T)$ is at most countable
\end{lemma}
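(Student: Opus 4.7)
The plan is to establish the following punctured-neighborhood property: for every $\lambda_0 \in \rho_{GK}(T)$ there exists $\epsilon > 0$ such that $T - \lambda I$ is Kato for every $\lambda$ with $0 < |\lambda - \lambda_0| < \epsilon$. Once this is in hand, the conclusion is purely topological: every point of $\rho_{GK}(T) \setminus \rho_K(T)$ is isolated in that set, making it a discrete subset of the second-countable space $\mathbb{C}$ and hence at most countable.

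To prove the punctured-neighborhood property, I would fix $\lambda_0 \in \rho_{GK}(T)$ and use a generalized Kato decomposition to write $X = X_1 \oplus X_2$ with $T - \lambda_0 I = T_1 \oplus T_2$, where $T_1$ on $X_1$ is Kato and $T_2$ on $X_2$ is quasi-nilpotent. Setting $\mu = \lambda - \lambda_0$, we get $T - \lambda I = (T_1 - \mu I_{X_1}) \oplus (T_2 - \mu I_{X_2})$. Since the Kato resolvent $\rho_K(T_1)$ is open in $\mathbb{C}$, there is $\epsilon_1 > 0$ such that $T_1 - \mu I_{X_1}$ is Kato on $X_1$ whenever $|\mu| < \epsilon_1$. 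On the other hand, the quasi-nilpotence of $T_2$ forces $\sigma(T_2) = \{0\}$, so $T_2 - \mu I_{X_2}$ is invertible on $X_2$ for every $\mu \neq 0$, and in particular Kato (since its kernel is trivial and its range is all of $X_2$). A direct sum of two Kato operators is itself Kato, because kernel, range, and hyper-range all decompose along the splitting $X = X_1 \oplus X_2$. Hence $T - \lambda I$ is Kato for all $\lambda$ with $0 < |\lambda - \lambda_0| < \epsilon_1$, as required.

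With the punctured-neighborhood property in hand, each $\lambda_0 \in \rho_{GK}(T) \setminus \rho_K(T)$ sits at the center of an open disk whose intersection with $\rho_{GK}(T) \setminus \rho_K(T)$ is just $\{\lambda_0\}$, so the set is discrete in $\mathbb{C}$. Since $\mathbb{C}$ is second countable, any discrete subset is at most countable (one can inject such a set into $\mathbb{Q}[i] \times \mathbb{Q}^{+}$ by picking a rational point and a rational radius inside each isolating disk). The main obstacle is verifying that the direct sum of a Kato operator and an invertible operator is Kato; this requires checking that $R^{\infty}(\cdot)$ decomposes across the direct sum so that the defining inclusion $N(\cdot) \subseteq R^{\infty}(\cdot)$ transfers cleanly from the summands to the whole.
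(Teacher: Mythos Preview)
The paper does not supply its own proof of this lemma; it is quoted directly from \cite{W}. Your argument is correct and is essentially the standard one: the punctured-neighborhood property you establish (that every point of $\rho_{GK}(T)$ has a deleted disk lying inside $\rho_K(T)$) is exactly what one proves in this setting, and the countability conclusion then follows by discreteness. Your verification that a direct sum of Kato operators is Kato is routine once one notes that $R(S^n) = R(S_1^n) \oplus R(S_2^n)$ along the fixed splitting $X = X_1 \oplus X_2$, so that $R^\infty(S) = R^\infty(S_1) \oplus R^\infty(S_2)$ and the inclusion $N(S) \subseteq R^\infty(S)$ transfers componentwise.
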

Since $\rho_{pBF}(T)\setminus \rho_{K}(T)\subset \rho_{GK}(T)\backslash \rho_{K}(T)$, we can easily obtain that:

\begin{corollary}\label{1}
Let $T\in \mathcal{B}(X)$.\\
$\rho_{pBF}(T)\setminus \rho_{K}(T)$ is at most countable.
\end{corollary}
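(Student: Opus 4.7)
The plan is to leverage Theorem \ref{bb} together with Lemma \ref{aaa} to push the statement through in essentially one line, since the author has already flagged the containment $\rho_{pBF}(T)\setminus \rho_{K}(T)\subset \rho_{GK}(T)\setminus \rho_{K}(T)$ as the key step.

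First I would verify the containment $\rho_{pBF}(T) \subset \rho_{GK}(T)$. Given $\lambda \in \rho_{pBF}(T)$, the operator $T - \lambda I$ is pseudo B-Fredholm by definition, and Theorem \ref{bb} then yields that $T - \lambda I$ is pseudo Fredholm, i.e.\ $\lambda \in \rho_{GK}(T)$. Taking complements within $\rho_K(T)^c$ (equivalently, subtracting $\rho_K(T)$ from both sides) gives the inclusion
\[
\rho_{pBF}(T)\setminus \rho_K(T) \subset \rho_{GK}(T)\setminus \rho_K(T).
\]

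Next I would appeal to Lemma \ref{aaa}, which asserts that $\rho_{GK}(T)\setminus \rho_K(T)$ is at most countable. A subset of an at most countable set is itself at most countable, so the inclusion above forces $\rho_{pBF}(T)\setminus \rho_K(T)$ to be at most countable, which is exactly the claim.

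There is essentially no obstacle here: the only nontrivial ingredient is Theorem \ref{bb}, which has already been proved, and Lemma \ref{aaa}, quoted from \cite{W}. The corollary is therefore a direct consequence, and the proof should reduce to two or three sentences implementing the inclusion above.
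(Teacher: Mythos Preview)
Your proposal is correct and matches the paper's own argument exactly: the paper notes the inclusion $\rho_{pBF}(T)\setminus \rho_{K}(T)\subset \rho_{GK}(T)\setminus \rho_{K}(T)$ (which, as you observe, follows from Theorem~\ref{bb}) and then invokes Lemma~\ref{aaa}. There is nothing to add or change.
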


\begin{proposition}\label{ccc}
Let $T\in \mathcal{B}(X)$. Then  the following statements are equivalent  :
\begin{enumerate}

 \item $\sigma_{pBF}(T)$ is at most countable
 \item $\sigma_{pBW}(T)$ is at most countable
 \item $\sigma(T)$ is at most countable
 \end{enumerate}

\end{proposition}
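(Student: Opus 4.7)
The two implications $(3)\Rightarrow(2)\Rightarrow(1)$ are immediate from the chain of inclusions
$$\sigma_{pBF}(T)\subset\sigma_{pBW}(T)\subset\sigma(T)$$
noted earlier in the paper. The substance of the proposition is the reverse implication $(1)\Rightarrow(3)$, and the plan is to route the argument through the Kato spectrum $\sigma_{K}(T)$.

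First I would reinterpret Corollary \ref{1} at the level of spectra: since $\rho_{pBF}(T)\setminus\rho_{K}(T)=\sigma_{K}(T)\setminus\sigma_{pBF}(T)$, the corollary says $\sigma_{K}(T)\setminus\sigma_{pBF}(T)$ is at most countable. Assuming (1), I can then write $\sigma_{K}(T)=(\sigma_{K}(T)\setminus\sigma_{pBF}(T))\cup(\sigma_{K}(T)\cap\sigma_{pBF}(T))$ as a union of two at most countable sets, so $\sigma_{K}(T)$ is at most countable.

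Next I would use the topological fact that the complement of an at most countable subset of $\mathbb{C}$ is (path-)connected. This means $\rho_{K}(T)$ has a single connected component, namely $\rho_{K}(T)$ itself. Since $\rho(T)\subset\rho_{K}(T)$ and $\rho(T)\neq\varnothing$ (it contains the exterior of the disk of radius $\|T\|$), this unique component intersects $\rho(T)$. Lemma \ref{2} then yields $\rho_{K}(T)\subset\rho(T)$, equivalently $\sigma(T)\subset\sigma_{K}(T)$. Combined with the countability of $\sigma_{K}(T)$, this gives (3).

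The main obstacle, insofar as there is one, is the topological step that the complement of a countable set in $\mathbb{C}$ is connected; after that, the argument is essentially just assembling Corollary \ref{1} and Lemma \ref{2}. No further spectral machinery or delicate estimates appear to be needed.
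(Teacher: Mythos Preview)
Your proposal is correct and follows essentially the same route as the paper: both arguments combine Corollary~\ref{1} with the topological fact that the complement of an at most countable subset of $\mathbb{C}$ is connected to conclude that $\rho_{K}(T)$ is connected, and then invoke Lemma~\ref{2} to obtain $\sigma(T)=\sigma_{K}(T)$. The only cosmetic difference is that you phrase the intermediate step as ``$\sigma_{K}(T)$ is at most countable'' while the paper writes it as ``$\rho_{K}(T)=\rho_{pBF}(T)\setminus(\rho_{pBF}(T)\setminus\rho_{K}(T))$ is connected''; your formulation is in fact cleaner, since it does not implicitly assume $\rho_{K}(T)\subset\rho_{pBF}(T)$.
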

\begin{proof}
 $1)\Longrightarrow 3)$ Suppose that $\sigma_{pBF}(T)$ is at most countable then $\rho_{pBF}(T)$ is connexe, by corollary \ref{1} $\rho_{pBF}(T)\setminus \rho_{K}(T)$ is at most countable. Hence $\rho_{K}(T)=\rho_{pBF}(T)\setminus(\rho_{pBF}(T)\setminus \rho_{K}(T))$ is connexe.  By lemma \ref{2} $\sigma(T)=\sigma_{K}(T)$. Therefore $\sigma(T)=\sigma_{pBF}(T)\cup(\rho_{pBF}(T)\setminus \rho_{K}(T))$ is at most countable.\\
 $3)\Longrightarrow 1)$ Obvious.\\
 $2)\Longrightarrow 3)$  If $\sigma_{pBW}(T)$ is at most countable then $\rho_{pBW}(T)$ is connexe,  since every pseudo B-Weyl operator is a pseudo B-Fredholm operator by corollary \ref{1} $\rho_{pBW}(T)\setminus \rho_{K}(T)$ is at most countable. Hence $\rho_{K}(T)=\rho_{pBW}(T)\setminus(\rho_{pBW}(T)\setminus \rho_{K}(T))$ is connexe.  By lemma \ref{2} $\sigma(T)=\sigma_{K}(T)$. Therefore $\sigma(T)=\sigma_{pBW}(T)\cup(\rho_{pBW}(T)\setminus \rho_{K}(T))$ is at most countable.\\
 $3)\Longrightarrow 2)$ Obvious.
\end{proof}
\begin{corollary}
Let $T\in \mathcal{B}(X)$, if $\sigma_{GK}(T)$ is at most countable. Then:\\
$T$ is a spectral operator if and only if $T$ is similar to a paranormal operator.
\end{corollary}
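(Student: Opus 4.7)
The plan is a two-step reduction. I would first extend the equivalences of Proposition \ref{ccc} to include countability of $\sigma_{GK}(T)$; this would reduce the corollary to the well-known case of operators with at most countable spectrum, after which the stated equivalence between spectrality and similarity to a paranormal operator is a classical fact one invokes from the literature.

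For the first step, suppose $\sigma_{GK}(T)$ is at most countable. Then $\rho_{GK}(T)$ is the complement in $\mathbb{C}$ of a countable set, so it is (open and) connected. By Lemma \ref{aaa}, the set $\rho_{GK}(T)\setminus \rho_{K}(T)$ is at most countable, hence
\begin{equation*}
\rho_{K}(T) \;=\; \rho_{GK}(T)\setminus\bigl(\rho_{GK}(T)\setminus \rho_{K}(T)\bigr)
\end{equation*}
is obtained from the connected open set $\rho_{GK}(T)$ by removing at most countably many points; such a set in $\mathbb{C}$ remains connected. Since $\rho(T)\subset \rho_{K}(T)$ is nonempty, Lemma \ref{2} forces $\rho_{K}(T)\subset \rho(T)$, so $\sigma(T)=\sigma_{K}(T)$. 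Taking complements,
\begin{equation*}
\sigma(T) \;=\; \sigma_{K}(T) \;=\; \sigma_{GK}(T)\cup\bigl(\rho_{GK}(T)\setminus \rho_{K}(T)\bigr)
\end{equation*}
is a union of two at-most-countable sets and is therefore at most countable. This exactly mirrors the pattern of Proposition \ref{ccc}; the only new observation is that Corollary \ref{1} has an analogue with $\rho_{GK}$ in place of $\rho_{pBF}$, which is precisely Lemma \ref{aaa}.

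For the second step, once $\sigma(T)$ has been shown to be at most countable, one invokes the known characterization that, for operators with countable spectrum, being a spectral operator coincides with being similar to a paranormal operator. This is where the work of the corollary truly lives, and it is the step I expect to be the main obstacle: the reduction to countable spectrum is mechanical, but the ``spectral $\Longleftrightarrow$ similar to paranormal'' equivalence has to be pulled from the spectral-theoretic literature (the same body of results that the authors appear to assume in stating the corollary without proof), and in particular one must verify that the paranormal side behaves well under the countable-spectrum hypothesis. Once that external fact is in hand, the corollary is an immediate combination of it with the extension of Proposition \ref{ccc} sketched above.
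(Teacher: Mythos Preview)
Your proposal is correct and matches the paper's approach: the paper's entire proof is a bare citation to \cite[Theorem 2.4 and Corollary 2.5]{MMB}, so both you and the authors defer the ``spectral $\Leftrightarrow$ similar to paranormal under a countability hypothesis'' equivalence to the literature. Your first step, the explicit reduction from countable $\sigma_{GK}(T)$ to countable $\sigma(T)$ via Lemma~\ref{aaa} and Lemma~\ref{2}, is exactly the argument of Proposition~\ref{ccc} run with $\rho_{GK}$ in place of $\rho_{pBF}$; the paper simply folds this into the citation rather than spelling it out.
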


\begin{proof}
See \cite[Theoerem 2.4 and Corollary 2.5]{MMB}
\end{proof}

Let $T\in\mathcal{B}(X)$. The operator range topology on $R(T)$ is the topology induced by the norm $||.||_T$
 defined by $||y||_T:= \displaystyle \inf_{x\in X}\{||x|| : y = Tx\}.$\\
For a detailed discussion of operator ranges and their topology we refer the reader to \cite{G1}.\\
$T$ is said to have uniform descent  for $n\geq d$ if $R(T)+ N(T^n)=R(T)+N(T^d)$ for $n\geq d$. If in addition, $R(T^n)$ is closed in the operator range topology of $R(T^d)$ for $n\geq d$, then $T$ is said to have topological uniform descent (TUD for brevity )  for $n\geq d$. The topological  uniform descent spectrum :
$$\sigma_{ud}(T)=\{\lambda\in\mathbb{C}, T-\lambda \mbox{ does not have TUD}\}$$
Let $T\in\mathcal{B}(X)$, the ascent of $T$ is defined by $a(T)=min\{p\in\mathbb{N}: N(T^p)=N(T^{p+1})\}$, if such $p$ does not exist we let $a(T)=\infty$. Analogously the descent of $T$ is $d(T)=min\{q\in\mathbb{N}: R(T^q)=R(T^{q+1})\}$, if such $q$ does not exist we let $d(T)=\infty$ \cite{LT}. It is well known that
if both $a(T)$ and $d(T)$ are finite then $a(T)=d(T)$ and we have the decomposition $X=R(T^p)\oplus N(T^p)$ where $p=a(T)=d(T)$. The  descent and ascent spectra  of $T\in \mathcal{B}(X)$ are defined by :  $$\sigma_{des}(T)=\{\lambda\in\mathbb{C},\,\, T-\lambda \mbox{  has  not  finite   descent}\}$$
    $$\sigma_{ac}(T)=\{\lambda\in\mathbb{C},\,\, T-\lambda \mbox{ has  not  finite ascent }\}.$$

 On the other hand, a bounded operator $T\in\mathcal{B}(X)$ is said to be a Drazin invertible if there exists a positive integer $k$ and an operator  $S\in\mathcal{B}(X)$ such that  $$ST=TS, \,\,\,T^{k+1}S=T^k\,\, \,\,and\,\,  S^2T=S.$$
This  is also equivalent to  the fact that $T=T_1\oplus T_2$; where $T_1$ is invertible  and $T_2$ is nilpotent.
Recall that an operator $T$ is Drazin invertible if it has a finite ascent and descent.
The concept of Drazin invertible operators has been generalized by Koliha \cite{K}. In fact $T\in \mathcal{B}(X)$ is generalized Drazin invertible if and only if $0\notin acc\sigma(T)$ the set of all  points of accumulation of $\sigma(T)$, which is also equivalent to the fact that $T=T_1\oplus T_2$  where $T_1$ is invertible  and $T_2$ is quasinilpotent.
The  Drazin and generalized Drazin   spectra of $T\in \mathcal{B}(X)$ are defined by :
$$\sigma_{D}(T)=\{\lambda\in\mathbb{C},\,\, T-\lambda   \mbox{ is  not  Drazin invertible}\}$$
    $$\sigma_{gD}(T)=\{\lambda\in\mathbb{C},\,\, T-\lambda  \mbox{ is  not  generalized  Drazin} \}$$
    
We denote by, $\sigma^{e}_{des}(T)$, $\sigma_{rGD}(T)$ and  $\sigma_{lGD}(T)$ respectively the essential descent, right generalized Drazin and left generalized Drazin spectra of $T$.
According to corollary \ref{ccc}, \cite[Theorem 3.3]{W} and  \cite[corollary 3.4]{QZZZ}, we have the following:
\begin{corollary}
Let $T\in \mathcal{B}(X)$. Then  the following statements are equivalent
\begin{enumerate}
 \item $\sigma(T)$ is at most countable;
 \item $\sigma_{pBF}(T)$ is at most countable;
 \item $\sigma_{pBW}(T)$ is at most countable;
 \item $\sigma_{ud}(T)$ is at most countable;
 \item $\sigma_{GK}(T)$ is at most countable;
 \item $\sigma_{GD}(T)$ is at most countable;
 \item $\sigma_{lGD}(T)$ is at most countable;
 \item $\sigma_{rGD}(T)$ is at most countable;
 \item $\sigma_{D}(T)$ is at most countable;
 \item $\sigma_{K}(T)$ is at most countable;
 \item $\sigma_{BF}(T)$ is at most countable;
 \item $\sigma_{BW}(T)$ is at most countable;
 \item $\sigma_{des}(T)$ is at most countable;
 \item $\sigma^{e}_{des}(T)$ is at most countable;
 \end{enumerate}

 \end{corollary}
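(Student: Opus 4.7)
The plan is to reduce the statement to Proposition~\ref{ccc} together with two published results from the literature. Proposition~\ref{ccc} already establishes $(1)\Longleftrightarrow(2)\Longleftrightarrow(3)$, so the task is to hook the remaining eleven conditions into this equivalence.

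First I would verify that every listed spectrum is contained in $\sigma(T)$. This is immediate once one observes that an invertible operator is simultaneously Kato, (generalized) Drazin invertible, Fredholm, B-Fredholm, B-Weyl, has TUD, and has finite (essential) ascent/descent. This gives $(1)\Longrightarrow(k)$ for every $k\in\{4,\dots,14\}$ for free.

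For the converses I would propagate countability through symmetric-difference arguments, using the same template as the proof of Proposition~\ref{ccc}. The central tool from this paper is Lemma~\ref{aaa}, which says $\rho_{GK}(T)\setminus\rho_K(T)$ is at most countable; combined with the trivial inclusion $\sigma_{GK}(T)\subset\sigma_K(T)$ this gives $(5)\Longleftrightarrow(10)$, and Lemma~\ref{2} then upgrades countability of $\sigma_K(T)$ to countability of $\sigma(T)$ (exactly the argument already used in Proposition~\ref{ccc}). For the remaining spectra $\sigma_{ud}$, $\sigma_{GD}$, $\sigma_{lGD}$, $\sigma_{rGD}$, $\sigma_{D}$, $\sigma_{BF}$, $\sigma_{BW}$, $\sigma_{des}$, $\sigma^{e}_{des}$, I would invoke the cited \cite[Theorem 3.3]{W} and \cite[Corollary 3.4]{QZZZ}, which each deliver an analogous countability equivalence relating one of these spectra to $\sigma_{K}(T)$ or $\sigma(T)$; chaining these with the already-established equivalences closes the cycle.

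The only real obstacle is organizational rather than mathematical: no individual step is deep, but one must draw a connected diagram of implications covering all fourteen conditions without gaps. I would present the argument as a single cycle $(1)\Longrightarrow(2)\Longrightarrow\cdots\Longrightarrow(14)\Longrightarrow(1)$, choosing at each stage either an obvious spectral inclusion (for $(1)\Longrightarrow(k)$) or the cheapest among Proposition~\ref{ccc}, Lemma~\ref{aaa}, Corollary~\ref{1}, \cite[Theorem 3.3]{W}, \cite[Corollary 3.4]{QZZZ} to return from $(k)$ to $(1)$.
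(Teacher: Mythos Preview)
Your proposal is correct and matches the paper's approach exactly: the paper does not even supply a proof environment for this corollary, merely prefacing it with the remark that it follows from Proposition~\ref{ccc}, \cite[Theorem 3.3]{W}, and \cite[Corollary 3.4]{QZZZ}. Your write-up is in fact more detailed than the paper's, spelling out the trivial spectral inclusions for the forward direction and the role of Lemmas~\ref{2} and~\ref{aaa} in returning to~$(1)$, but the substantive ingredients are the same three cited results.
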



In \cite{QHH}, the showed that an operator with TUD for $n\geq d$, $K(T)=R^{\infty}(T)$ and $\overline{H_0(V)}= \overline{N^{\infty}(V)}$.\\
For a pseudo B-Fredholm operator, these properties do not necessarily hold. Indeed:
let $X$ be the Banach space of continuous functions on $[0, 1]$, denoted  by $\mathcal{C}([0,1])$, provided with the infinity norm. We define by $V$, the Volterra operator, $X$ by : $$Vf(x):=\int_{0}^{x} f(x) \, \mathrm{d}x$$.

$V$ is injective and quasi-nilpotent. In addition, $N^{\infty}(V)=\{0\}$, $K(V)=\{0\}$ and we have $R^{\infty}(V)=\{f\in C^{\infty}[0,1]:\,\, f^{(n)}(0)=0,\,\, n\in\mathbb{N}\}$, thus $R^{\infty}(V)$ is not closed. Hence:
\begin{enumerate}
  \item $K(V)\neq R^{\infty}(V)$
  \item $\overline{H_0(V)}\neq\overline{N^{\infty}(V)}$
\end{enumerate}
  Note that $V$ is a compact operator, then $R(V)$ is not closed.
\begin{theorem}
There exists a pseudo B-Fredholm operator  $T$ such that :
\begin{enumerate}
  \item $K(T)\neq R^{\infty}(T)$,
  \item $\overline{H_0(T)}\neq \overline{N^{\infty}(T)}$,
  \item $R(T)$ is not closed.
\end{enumerate}
\end{theorem}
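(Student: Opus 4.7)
The plan is to exhibit the Volterra operator $V$ on $X=\mathcal{C}([0,1])$, already introduced in the paragraph preceding the theorem, as the desired example. The three conclusions of the theorem are precisely the three facts established for $V$ in that paragraph, so the only thing that really needs to be added is the verification that $V$ itself is pseudo B-Fredholm.

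First I would recall, from the discussion in Section~2, that a quasi-nilpotent operator has empty pseudo B-Fredholm spectrum. This can be seen directly from the definition by taking the trivial decomposition $X=\{0\}\oplus X$, setting $T_1=0$ on $\{0\}$ (which is vacuously Fredholm, with $\dim N(T_1)=0$ and $\mathrm{codim}\,R(T_1)=0$) and $T_2=V$ on $X$, which is quasi-nilpotent by hypothesis. This shows that $T:=V$ is pseudo B-Fredholm.

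Next I would invoke, one by one, the three properties of $V$ that were computed just above the theorem statement. For (1), we have $K(V)=\{0\}$ whereas $R^{\infty}(V)=\{f\in C^{\infty}[0,1]:f^{(n)}(0)=0\text{ for all }n\in\mathbb{N}\}$ contains nontrivial elements (for instance any smooth bump supported away from $0$), so $K(V)\neq R^{\infty}(V)$. For (2), since $V$ is quasi-nilpotent we have $H_0(V)=X$, hence $\overline{H_0(V)}=X$, while $N^{\infty}(V)=\{0\}$ because $V$ is injective, and therefore $\overline{H_0(V)}\neq\overline{N^{\infty}(V)}$. For (3), $V$ is a compact operator on the infinite-dimensional space $\mathcal{C}([0,1])$ with infinite-dimensional range, and a compact operator with closed range is necessarily of finite rank, so $R(V)$ cannot be closed.

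There is no real obstacle here; the only delicate point is making sure that the definition of pseudo B-Fredholm accommodates the purely quasi-nilpotent case, i.e.\ that the decomposition $X=\{0\}\oplus X$ with $T_1$ the zero operator on $\{0\}$ is admissible. Since the paper already observes that quasi-nilpotent operators lie in this class, I would simply cite that observation rather than belabor it. The construction thus reduces to packaging together the computations of $K(V)$, $R^{\infty}(V)$, $H_0(V)$, $N^{\infty}(V)$ and $R(V)$ that the authors have already performed.
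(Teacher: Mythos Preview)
Your proposal is correct and follows exactly the paper's own approach: the paper's ``proof'' is precisely the paragraph preceding the theorem, which exhibits the Volterra operator $V$ on $\mathcal{C}([0,1])$ as a quasi-nilpotent (hence pseudo B-Fredholm) operator and computes $K(V)$, $R^{\infty}(V)$, $N^{\infty}(V)$, $H_0(V)$ and notes $R(V)$ is not closed by compactness. Your write-up is in fact slightly more careful than the paper in justifying why $V$ is pseudo B-Fredholm via the trivial decomposition and in exhibiting a nonzero element of $R^{\infty}(V)$.
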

\begin{proposition}\label{aa}
Let $T\in \mathcal{B}(X)$. Then  the following statements are equivalent
\begin{enumerate}

 \item $\sigma_{pBF}(T)$ is empty
 \item $\sigma_{pBW}(T)$ is empty
 \item $\sigma_{GK}(T)$ is empty
 \item $\sigma(T)$ is finite
 \end{enumerate}
\end{proposition}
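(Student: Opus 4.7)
The plan is to run a cycle $(4) \Rightarrow (2) \Rightarrow (1) \Rightarrow (3) \Rightarrow (4)$. The middle implications $(2) \Rightarrow (1) \Rightarrow (3)$ are immediate from the chain of spectra $\sigma_{GK}(T) \subset \sigma_{pBF}(T) \subset \sigma_{pBW}(T)$ established earlier in the section.

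For $(4) \Rightarrow (2)$, I would use the Riesz functional calculus: if $\sigma(T)$ is finite then every $\lambda \in \sigma(T)$ is isolated in $\sigma(T)$, and the associated spectral projection yields a decomposition $X = E_\lambda \oplus F_\lambda$ in which $T - \lambda$ restricts to a quasinilpotent operator on $E_\lambda$ (the spectrum there is $\{0\}$) and to an invertible operator on $F_\lambda$. Since an invertible operator is Weyl, this exhibits $T - \lambda$ as pseudo B-Weyl; for $\lambda \notin \sigma(T)$, $T - \lambda$ is pseudo B-Weyl trivially. Hence $\sigma_{pBW}(T) = \emptyset$.

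The remaining implication $(3) \Rightarrow (4)$ is the main obstacle and the heart of the argument; I would split it into two claims. \emph{First,} $\sigma(T) = \sigma_K(T)$: since $\sigma_{GK}(T) = \emptyset$, Lemma \ref{aaa} gives that $\sigma_K(T) = \mathbb{C} \setminus \rho_K(T) = \rho_{GK}(T) \setminus \rho_K(T)$ is at most countable, so $\rho_K(T)$ is $\mathbb{C}$ with at most countably many points removed and is therefore connected. Since $\rho(T) \subset \rho_K(T)$ is nonempty, Lemma \ref{2} forces $\rho_K(T) \subset \rho(T)$, whence $\sigma(T) = \sigma_K(T)$. \emph{Second,} every $\lambda_0 \in \sigma(T)$ is isolated: by pseudo Fredholmness, write $T - \lambda_0 = A \oplus B$ with $A$ Kato and $B$ quasinilpotent; for small $\mu \neq 0$, $A - \mu$ remains Kato (the Kato resolvent is open) while $B - \mu$ is invertible (since $\sigma(B) = \{0\}$), and a direct sum of two Kato operators is again Kato. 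Hence $\lambda_0 + \mu \in \rho_K(T) = \rho(T)$ on a punctured neighbourhood of $\lambda_0$. Thus $\sigma(T)$ is a compact subset of $\mathbb{C}$ consisting only of isolated points, and must therefore be finite.
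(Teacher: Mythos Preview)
Your cycle is correct, and the argument for $(3)\Rightarrow(4)$ is careful and complete. The paper organizes things differently: it simply \emph{cites} \cite[Theorem~3.3]{W} for the equivalence $(3)\Leftrightarrow(4)$, and then proves $(1)\Leftrightarrow(4)$ and $(2)\Leftrightarrow(4)$ separately. For $(4)\Rightarrow(1)$ the paper uses Schmoeger's decomposition $X=H_0(T-\lambda_0)\oplus K(T-\lambda_0)$ at an isolated spectral point together with the SVEP to conclude that the restriction to $K(T-\lambda_0)$ is bijective; this is essentially your Riesz-projection argument in different language, since that spectral projection has range $H_0(T-\lambda_0)$ and kernel $K(T-\lambda_0)$. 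For $(1)\Rightarrow(4)$ the paper writes $\sigma(T)=\rho_{pBF}(T)\setminus\rho_K(T)$, observes via Corollary~\ref{1} that this set is at most countable, and then asserts that ``bounded and at most countable'' gives finiteness --- a step that is not valid as stated and really needs the extra observation you supply, namely that every point of $\rho_{GK}(T)\setminus\rho_K(T)$ is \emph{isolated} (from the perturbation of the Kato/quasinilpotent decomposition). So your route is more self-contained on $(3)\Rightarrow(4)$ and patches a gap that the paper's compressed $(1)\Rightarrow(4)$ leaves open; the paper's route, on the other hand, delegates that work to the literature.
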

\begin{proof}
$3)\Longleftrightarrow 4)$ see \cite[Theorem 3.3]{W}. \\
$1)\Longrightarrow 4)$ If $\sigma_{pBF}(T)$ is empty then $\sigma(T)=\rho_{pBW}(T)\setminus \rho_{K}(T)$. By corollary \ref{1} $\rho_{pBW}(T)\setminus \rho_{K}(T)$  is at most countable  and  this set is bounded, hence  it is finite.\\
$4)\Longrightarrow 1)$ Suppose that $\sigma(T)$ is finite then for all $\lambda\in \sigma(T)$ is isolated, then $X=H_{0}(T-\lambda_{0})\oplus K(T-\lambda_{0})$, \cite[Theorem 4]{Schh}  $(T-\lambda_{0})_{\shortmid H_{0}(T-\lambda_{0})}$ is quasi nilpotent  and  $(T-\lambda_{0})_{\shortmid K(T-\lambda_{0})}$ is surjective, hence  $(T-\lambda_{0})_{\shortmid K(T-\lambda_{0})}$ is Fredholm. Indeed, $\lambda_{0}$ is an  isolated point, then $T$ has the SVEP at $\lambda_{0}$, hence  $(T-\lambda_{0})_{\shortmid K(T-\lambda_{0})}$ has the SVEP  at 0 and  it is surjective by \cite[corollary 2.24]{Aie} $(T-\lambda_{0})_{\shortmid K(T-\lambda_{0})}$ is bijective   \\
$2)\Longleftrightarrow 4)$ similar to $1)\Longleftrightarrow 4)$.
\end{proof}
A bounded operator $T\in\mathcal{B}(X)$ is said to be a Riesz operator  if $T-\lambda I$ is
  a Fredholm operator for every $\lambda\in \mathbb{C}\backslash\{0\}$.
\begin{corollary}
Let $T\in\mathcal{B}(X)$ a Riesz operator, then  the following statements are equivalent
\begin{enumerate}

 \item $\sigma_{pBF}(T)$ is empty,
 \item $\sigma_{pBW}(T)$ is empty,
 \item $\sigma_{GK}(T)$ is empty,
 \item $\sigma(T)$ is finite,
 \item $K(T)$ is closed,
 \item $K(T^*)$ is closed,
 \item $K(T)$ is finite-dimensional,
 \item $K(T-\lambda)$ is closed for all $\lambda\in \mathbb{C}$,
 \item $codim H_{0}(T)<\infty$,
 \item $codim H_{0}(T^*)<\infty$,
 \item $T=Q+F$, with $Q, F\in\mathcal{B}(X)$, $QF=FQ=0$,  $\sigma(Q)=\{0\}$ and $F$ is a finite rank operator.
 \end{enumerate}
\end{corollary}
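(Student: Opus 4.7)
The plan is to bootstrap the equivalences (1)$\Leftrightarrow$(2)$\Leftrightarrow$(3)$\Leftrightarrow$(4) directly from Proposition \ref{aa} (no Riesz hypothesis is needed there), and then attach the remaining conditions (5)--(11) to (4) by exploiting the rigid spectral structure of a Riesz operator. Recall that if $T$ is Riesz, then $\sigma(T)$ is countable with $0$ as its only possible accumulation point, and every $\lambda\in\sigma(T)\setminus\{0\}$ is a pole of finite rank with finite-dimensional generalised eigenspace $H_0(T-\lambda)$.

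The structural fact I would exploit is that, for each $\varepsilon>0$, the spectral projection $P_\varepsilon$ associated with $\sigma(T)\cap\{|\lambda|\ge\varepsilon\}$ has finite-dimensional range
\[
K_\varepsilon=\bigoplus_{\lambda\in\sigma(T),\,|\lambda|\ge\varepsilon}H_0(T-\lambda),
\]
so that $K(T)=\bigcup_{n\ge 1}K_{1/n}$ is an increasing union of closed finite-dimensional subspaces, while $H_0(T)$ is the complementary spectral subspace at $0$. The implications (4)$\Rightarrow$(5), (7), (8), (9) are then bookkeeping: if $\sigma(T)$ is finite the chain stabilises, so $K(T)=K_\varepsilon$ is finite-dimensional, $\mathrm{codim}\,H_0(T)=\dim K(T)<\infty$, and for each $\lambda\in\sigma(T)\setminus\{0\}$ the Riesz-pole decomposition $X=H_0(T-\lambda)\oplus K(T-\lambda)$ with finite-dimensional first summand shows $K(T-\lambda)$ is closed. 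The dual statements (6) and (10) follow because $T^{*}$ is Riesz with $\sigma(T^{*})=\sigma(T)$.

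For the converse directions, (5)$\Rightarrow$(4) follows from the Baire category theorem applied inside the Banach space $K(T)$: it is the countable union of the closed finite-dimensional subspaces $K_{1/n}$, so one of them has non-empty interior in $K(T)$ and must therefore coincide with it, forcing $\sigma(T)$ to be finite. The implications (7)$\Rightarrow$(5) and (8)$\Rightarrow$(5) are immediate. For (9)$\Rightarrow$(4), if $\sigma(T)\setminus\{0\}$ were infinite, choosing one non-zero vector from each generalised eigenspace $H_0(T-\lambda_i)$ produces infinitely many linearly independent vectors which inject into $X/H_0(T)$, since $H_0(T-\lambda_i)\cap H_0(T)=\{0\}$ for $\lambda_i\ne 0$ (a local-spectrum / SVEP argument at isolated points); this contradicts $\mathrm{codim}\,H_0(T)<\infty$, and the dual (10)$\Rightarrow$(4) is the same argument applied to $T^{*}$.

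Finally, (4)$\Leftrightarrow$(11) is a finite-rank West-type decomposition. Given $\sigma(T)$ finite, take $P$ to be the spectral projection associated with $\sigma(T)\setminus\{0\}$ and set $F:=TP$, $Q:=T(I-P)$; since $P$ has finite rank and commutes with $T$, one checks directly that $QF=FQ=0$, $\sigma(Q)=\{0\}$ and $F$ is finite rank. Conversely, the hypotheses $QF=FQ=0$ force $TF=FT=F^{2}$, so $R(F)$ is a finite-dimensional $T$-invariant subspace, and the factorisation
\[
T-\lambda I=(Q-\lambda I)\bigl(I+(Q-\lambda I)^{-1}F\bigr)\qquad(\lambda\ne 0)
\]
exhibits $\sigma(T)\setminus\{0\}$ as the set of $\lambda\ne 0$ where the finite-rank perturbation $I+(Q-\lambda I)^{-1}F$ fails to be invertible, a discrete subset of the compact $\sigma(T)$ and hence finite. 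The delicate step I expect is (9)$\Rightarrow$(4), where a weak codimensional hypothesis on $H_0(T)$ must be converted into full spectral finiteness; the converse of (11) also requires some care in extracting invariance from the two-sided orthogonality $QF=FQ=0$, but everything else reduces to standard spectral bookkeeping for Riesz operators.
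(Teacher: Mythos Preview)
Your proposal is correct but takes a different, more self-contained route than the paper. The paper handles (1)--(4) exactly as you do, via Proposition~\ref{aa}, and then for (4)$\Leftrightarrow$(5)--(11) simply invokes two external results on Riesz operators: \cite[Theorem~2.3]{W1} (Bouamama, Riesz operators with closed analytic core) and \cite[Corollary~9]{MMN} (Miller--Miller--Neumann). You instead rebuild those equivalences from scratch: the representation $K(T)=\bigcup_n K_{1/n}$ together with Baire category gives (5)$\Rightarrow$(4); a linear-independence count in $X/H_0(T)$ gives (9)$\Rightarrow$(4); and an explicit spectral-projection construction plus an analytic finite-rank perturbation argument handles (4)$\Leftrightarrow$(11). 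Your approach makes transparent exactly which features of Riesz operators drive each implication, at the cost of more writing; the paper's approach is a one-line citation that outsources the substance. One point worth tightening in your write-up is the identity $K(T)=\bigcup_{\varepsilon>0}K_\varepsilon$: it follows because a Riesz operator has SVEP (countable spectrum), so for $x\in K(T)$ the local spectrum $\sigma_T(x)$ avoids $0$ and, being a compact subset of the discrete set $\sigma(T)\setminus\{0\}$, is finite; but this deserves an explicit sentence since the Baire step hinges on it.
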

\begin{proof}
Direct consequence of Proposition \ref{aa} and \cite[Theorem 2.3]{W1} and \cite[Corollary 9]{MMN}
\end{proof}

In the following, we will prove that if $T$  is with finite descent, then $T$ is pseudo B-Fredholm if and only if $T$
is a B-Fredholm operator.
\begin{proposition}
Let $T\in\mathcal{B}(X)$ with finite descent. Then $T$ is a pseudo B-Fredholm if and only if $T$ is a B-Fredholm.
\end{proposition}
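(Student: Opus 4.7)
The reverse implication (B-Fredholm $\Rightarrow$ pseudo B-Fredholm) is immediate: every nilpotent operator is quasi-nilpotent, so the B-Fredholm decomposition already serves as a pseudo B-Fredholm decomposition. The substance of the proposition lies in the forward direction, and I would organize the proof in three steps.

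First, I would unpack the hypotheses. Since $T$ is pseudo B-Fredholm, write $X = M \oplus N$ with $M$, $N$ closed and $T = T_1 \oplus T_2$, where $T_1 = T|_M$ is Fredholm and $T_2 = T|_N$ is quasi-nilpotent. Because $R(T^n) = R(T_1^n) \oplus R(T_2^n)$ and the direct-sum decomposition is unique, the equality $R(T^{d+1}) = R(T^{d})$ coming from $d(T) \le d < \infty$ forces $R(T_2^{d+1}) = R(T_2^d)$, so $d(T_2) \le d$. The entire problem now reduces to the following lemma: \emph{a quasi-nilpotent operator with finite descent is nilpotent.}

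For this lemma, the key idea is to pass to the quotient Banach space. Since $N(T_2^d)$ is closed and $T_2(N(T_2^d)) \subseteq N(T_2^{d-1}) \subseteq N(T_2^d)$, the operator $T_2$ descends to a bounded operator $\widetilde{T_2}$ on the Banach space $Y = N/N(T_2^d)$. Two properties of $\widetilde{T_2}$ drive the argument:
\begin{enumerate}
\item $\widetilde{T_2}$ is \emph{surjective}: the relation $\widetilde{T_2}[x] = [y]$ amounts to $T_2^{d+1}x = T_2^d y$, and $T_2^d y \in R(T_2^d) = R(T_2^{d+1})$ guarantees the existence of $x$.
\item $\widetilde{T_2}$ is \emph{quasi-nilpotent}: from $\|\widetilde{T_2}^k[x]\| \le \|T_2^k(x+n)\|$ for every $n \in N(T_2^d)$, one gets $\|\widetilde{T_2}^k\| \le \|T_2^k\|$, so $r(\widetilde{T_2}) \le r(T_2) = 0$.
\end{enumerate}

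The contradiction is then obtained by the open mapping theorem: surjectivity of $\widetilde{T_2}$ yields a constant $c > 0$ such that every $y \in Y$ has a preimage $z$ with $\|z\| \le c\|y\|$. Iterating, a nonzero $y_0 \in Y$ produces $y_n \in Y$ with $\widetilde{T_2}^n y_n = y_0$ and $\|y_n\| \le c^n \|y_0\|$; hence $\|y_0\| \le \|\widetilde{T_2}^n\|\,\|y_n\|$ forces $\|\widetilde{T_2}^n\|^{1/n} \ge 1/c$, contradicting $r(\widetilde{T_2}) = 0$. Therefore $Y = 0$, meaning $N = N(T_2^d)$, i.e., $T_2^d = 0$. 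Thus $T_1$ is Fredholm and $T_2$ is nilpotent, so $T$ is B-Fredholm.

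The main obstacle is the lemma itself — one must resist the temptation to work directly on $R(T_2^d)$, which need not be closed. The whole point of descending to $N/N(T_2^d)$ is to regain a genuine Banach-space structure so that the open mapping theorem becomes available; once that setup is in place, the clash between a bounded right inverse and spectral radius zero is the standard trick.
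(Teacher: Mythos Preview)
Your proof is correct and follows the same route as the paper: transfer the finite-descent hypothesis to the quasi-nilpotent summand $T_2$ and then use that a quasi-nilpotent operator with finite descent is nilpotent. The paper simply asserts this last implication without justification, whereas you supply a complete argument for it via the quotient $N/N(T_2^d)$ and the open mapping theorem.
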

\begin{proof}
Obviously if $T$ is B-Fredholm then $T$ is pseudo B-Fredholm.\\
If $T$ is a pseudo B-Fredholm then $T=T_1\oplus T_2$ with $T_1$ is Fredholm operator and $T_2$ is quasinilpotent. Since $T$ has finite descent then $T_1$ and $T_2$ have finite descent, we have $T_2$ is quasinilpotent with finite descent implies that is a nilpotent operator. Thus $T$ is a B-Fredholm operator.
\end{proof}
In the following,  we show that an operator  with dense range is pseudo Fredholm  if and only if it is  a semi regular.
\begin{proposition}
Let $T\in \mathcal{B}(X)$. If $T$ is  with dense range, then :
\begin{center}
$T$ is a pseudo Fredholm if and only if $T$ is semi regular.
\end{center}

 \end{proposition}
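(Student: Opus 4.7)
The $(\Leftarrow)$ direction is immediate: if $T$ is semi-regular, then the trivial decomposition $X=X\oplus\{0\}$ with $T_1=T$ Kato and $T_2=0$ vacuously quasi-nilpotent exhibits $T$ as pseudo Fredholm, so I would just state this in one line.

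For the forward direction, I would fix a generalized Kato decomposition $X=M\oplus N$, $T=T_1\oplus T_2$ with $T_1=T|_M$ semi-regular and $T_2=T|_N$ quasi-nilpotent, then push the hypothesis $\overline{R(T)}=X$ through the block-diagonal structure. Since $R(T)=R(T_1)\oplus R(T_2)$ and the projections onto $M$ and $N$ along the direct sum are bounded, applying them to $\overline{R(T)}=X$ yields $\overline{R(T_1)}=M$ and $\overline{R(T_2)}=N$. Because $T_1$ is Kato its range is already closed, so $R(T_1)=M$ and $T_1$ is surjective on $M$ (still Kato, trivially).

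The heart of the proof is to conclude that the quasi-nilpotent block acts on the zero subspace, i.e., $N=\{0\}$, which then gives $T=T_1$ semi-regular. Since $T_2$ is quasi-nilpotent, $\sigma(T_2)=\{0\}$ and hence $\sigma_{su}(T_2)\subseteq\{0\}$. On a nontrivial Banach space the surjectivity spectrum is nonempty, so $N\neq\{0\}$ would force $0\in\sigma_{su}(T_2)$; that is, $T_2$ is not surjective while at the same time $\overline{R(T_2)}=N$. This dense-but-not-surjective dichotomy is the delicate point of the argument, since a priori a quasi-nilpotent operator can perfectly well have dense, proper range. I expect this to be the main obstacle, and the natural way I would try to resolve it is to take the canonical choice of GKD with $M=K(T)$ and $N\subseteq H_0(T)$, then exploit that $\overline{R(T)}=X$ together with the quasi-nilpotence of $T_2$ collapses $H_0(T)$ to the zero subspace.

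Once $N=\{0\}$ has been established, we have $T=T_1$ semi-regular, completing the equivalence.
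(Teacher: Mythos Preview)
Your proposal has a genuine gap exactly where you flag it: from $\overline{R(T_2)}=N$ and $T_2$ quasi-nilpotent one cannot conclude $N=\{0\}$, and the remedy you sketch (passing to a ``canonical'' GKD with $M=K(T)$ and ``collapsing $H_0(T)$'') does not work. Take $T=V$, the Volterra operator on $X=L^{2}[0,1]$. Then $T$ is quasi-nilpotent, hence pseudo-Fredholm via the trivial GKD $(M,N)=(\{0\},X)$, and $T$ has dense range (indeed $C_c^{\infty}(0,1)\subseteq R(V)$); yet $R(V)$ is not closed, so $T$ is \emph{not} semi-regular. In this example $H_{0}(T)=X\neq\{0\}$ and $K(T)=\{0\}$, so no choice of decomposition rescues the argument, and your hoped-for collapse of $H_0(T)$ simply does not occur.

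For comparison, the paper's route is different in flavour but runs into the same wall. It passes to the adjoint: $\overline{R(T)}=X$ gives $N(T^{*})=\{0\}$, hence $T^{*}$ has the SVEP at $0$; then Aiena's Theorem~3.15 is invoked to obtain $K(T)=M$, and from ``$M$ closed and $T|_{M}$ semi-regular'' the paper asserts $M=X$. That final implication is not explained, and the Volterra example above (where $K(T)=M=\{0\}\neq X$) shows it can fail. So the obstacle you correctly isolated is not merely ``delicate'': as the statement is written it appears to be insurmountable, and your outline, like the paper's own argument, does not close the gap.
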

 \begin{proof}
Every semi regular operator is a pseudo Fredholm.
Conversely, if $T$ admits a GKD, there exists a pair of $T$-invariant closed
subspaces $(M,N)$ such that $X=M\oplus N$, the restriction
$T_{\shortmid M}$ is semi-regular, and $T_{\shortmid N}$ is
quasinilpotent. $T$ has dense range give $\overline{R(T)}=X$  $\Rightarrow N(T^*)=\{0\}$ then $T^*$ have the SVEP at $0$. According to \cite[Theorem 3.15]{Aie}, we have $K(T)=M$. Since $M$  is closed and $T_{\shortmid M}$ is semi-regular (then $M=X$), therefore $T$ is semi-regular.
\end{proof}
\begin{corollary}
Let $T\in \mathcal{B}(X)$, with dense range. Then :
\begin{center}
 $T$ is a pseudo B-Fredholm $\Rightarrow$  $T$ is semi regular.
\end{center}
In particular,  $T$ is a generalized Drasin invertible $\Rightarrow$  $T$ is semi regular.

\end{corollary}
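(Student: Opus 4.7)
The plan is to obtain this corollary essentially for free by chaining the two results just established: Theorem \ref{bb} (every pseudo B-Fredholm operator is pseudo Fredholm) and the immediately preceding proposition (an operator with dense range is pseudo Fredholm if and only if it is semi-regular). So for the first implication, I would simply take $T\in\mathcal{B}(X)$ pseudo B-Fredholm with $\overline{R(T)}=X$, invoke Theorem \ref{bb} to upgrade ``pseudo B-Fredholm'' to ``pseudo Fredholm'', and then invoke the preceding proposition to conclude ``semi-regular''. No new work is required here.

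For the ``In particular'' clause I would reduce generalized Drazin invertibility to pseudo B-Fredholm, which is the obvious inclusion: if $T$ is generalized Drazin invertible, then by Koliha's characterization recalled in the preliminaries one has a decomposition $X=X_1\oplus X_2$ with $T=T_1\oplus T_2$, $T_1$ invertible and $T_2$ quasinilpotent. Since every invertible operator is in particular Fredholm, this decomposition witnesses that $T$ is pseudo B-Fredholm, and then the first part applies verbatim under the dense range hypothesis.

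There is no serious obstacle; the only thing worth spelling out is that the ``in particular'' statement does not follow from a separate argument but is genuinely a specialization of the first implication via the trivial chain ``invertible $\Rightarrow$ Fredholm'', so that generalized Drazin invertible $\Rightarrow$ pseudo B-Fredholm. If I wanted to make the proof self-contained I could also note that in the dense range case one actually gets the stronger consequence that the quasinilpotent summand is trivial (since $T^*$ has SVEP at $0$, the analytic core $K(T)$ coincides with the semi-regular summand $M$ by \cite[Theorem 3.15]{Aie}, forcing $M=X$), but this observation was already made inside the preceding proposition and need not be reproduced. The proof is therefore a two-line citation chain.
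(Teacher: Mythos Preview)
Your proposal is correct and matches the paper's (implicit) argument: the corollary is stated without proof immediately after the proposition on dense-range operators, and is meant to follow exactly by composing Theorem~\ref{bb} with that proposition, together with the obvious inclusion ``generalized Drazin invertible $\Rightarrow$ pseudo B-Fredholm'' for the particular case.
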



\section{ Classification Of The Components Of Pseudo B-Fredholm Resolvent }

\begin{lemma}
Let $T\in\mathcal{B}(X)$ a pseudo B-Fredholm, then there exists $\varepsilon>0$ such that for all  $|\lambda|<\varepsilon$, we have:
\begin{enumerate}
  \item $K(T-\lambda)+H_{0}(T-\lambda)=K(T)+H_{0}(T)$.
  \item $K(T-\lambda)\cap \overline{H_{0}(T-\lambda)}=K(T) \cap \overline{H_{0}(T)}$.
\end{enumerate}
\end{lemma}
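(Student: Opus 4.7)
The plan is to exploit the pseudo B-Fredholm decomposition $X = M \oplus N$, $T = T_1 \oplus T_2$ (with $T_1 = T|_M$ Fredholm and $T_2 = T|_N$ quasi-nilpotent) together with the fact that $T - \lambda = (T_1 - \lambda) \oplus (T_2 - \lambda)$ respects this same splitting for every $\lambda$. Using the distributivity of $K(\cdot)$, $H_0(\cdot)$ and of closure over direct sums of closed subspaces, both identities split into an $M$-part involving the Fredholm operator $T_1$ and an $N$-part involving the quasi-nilpotent $T_2$, which can then be analyzed separately.

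First, I would handle the quasi-nilpotent summand. Because $T_2$ is quasi-nilpotent it has SVEP and $\sigma(T_2) = \{0\}$, from which $H_0(T_2) = N$ and $K(T_2) = \{0\}$. Choose $\varepsilon_1 > 0$ so that $T_2 - \lambda$ is invertible for $0 < |\lambda| < \varepsilon_1$; then $K(T_2 - \lambda) = N$ and $H_0(T_2 - \lambda) = \{0\}$. Consequently the $N$-contribution to $K(T - \lambda) + H_0(T - \lambda)$ is the constant subspace $N$ (either $\{0\} + N$ at $\lambda = 0$ or $N + \{0\}$ for $0 < |\lambda| < \varepsilon_1$), and the $N$-contribution to $K(T - \lambda) \cap \overline{H_0(T - \lambda)}$ is the constant $\{0\}$ in both cases. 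This is exactly the asymmetry needed.

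Second, I would treat the Fredholm summand $T_1$. Since $T_1$ is Fredholm at $0$, it admits a Kato decomposition, so in particular $0 \in \rho_{GK}(T_1)$, and because $\rho_{GK}$ is open there exists $\varepsilon_2 > 0$ such that $T_1 - \lambda$ admits a generalized Kato decomposition for every $|\lambda| < \varepsilon_2$. On the connected disc $D(0,\varepsilon_2)$ I would then invoke the known local constancy of the maps $\lambda \mapsto K(T_1 - \lambda) + H_0(T_1 - \lambda)$ and $\lambda \mapsto K(T_1 - \lambda) \cap \overline{H_0(T_1 - \lambda)}$ on connected components of the generalized Kato resolvent; this yields $K(T_1 - \lambda) + H_0(T_1 - \lambda) = K(T_1) + H_0(T_1)$ and $K(T_1 - \lambda) \cap \overline{H_0(T_1 - \lambda)} = K(T_1) \cap \overline{H_0(T_1)}$ for $|\lambda| < \varepsilon_2$. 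Setting $\varepsilon := \min(\varepsilon_1, \varepsilon_2)$ and recombining via direct sum distributivity finishes both identities.

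The main obstacle is the second step, namely the local constancy of $K + H_0$ and $K \cap \overline{H_0}$ on the GKD-resolvent of the Fredholm piece. If a direct citation is not available in the form stated, I would prove it by refining the decomposition once more: since $T_1$ Fredholm is in particular of Kato type, split $M = M_1 \oplus M_2$ with $T_1|_{M_1}$ semi-regular and $T_1|_{M_2}$ nilpotent on a finite-dimensional $M_2$. The nilpotent summand behaves exactly like the quasi-nilpotent factor analyzed above (constant contribution $M_2$ to the sum and $\{0\}$ to the intersection), and for the semi-regular summand one uses Mbekhta's stability theorems for semi-regular operators to get $K$ and $H_0$ constant on a small disc. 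This reduces the problem to purely structural identifications and makes the two equalities follow by reassembly.
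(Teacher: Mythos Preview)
Your argument is correct, but the paper's proof is a two-line affair that bypasses the direct-sum bookkeeping entirely: it invokes Theorem~\ref{bb} (pseudo B-Fredholm $\Rightarrow$ pseudo Fredholm), and then cites \cite[Theorem~4.2]{W}, which already gives the local constancy of $\lambda\mapsto K(T-\lambda)+H_0(T-\lambda)$ and $\lambda\mapsto K(T-\lambda)\cap\overline{H_0(T-\lambda)}$ for pseudo Fredholm operators, applied to $T$ itself. You instead keep the decomposition $T=T_1\oplus T_2$, handle the quasi-nilpotent summand by hand, and then apply that same constancy result (or reprove it via a further Kato-type splitting and Mbekhta's stability theorems) to the Fredholm summand $T_1$ only. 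Both routes rest on the same underlying ingredient; yours makes the mechanism explicit and is self-contained if the Bouamama citation is unavailable, while the paper's route is shorter precisely because Theorem~\ref{bb} has already absorbed the passage from Fredholm-plus-quasinilpotent to Kato-plus-quasinilpotent.
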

\begin{proof}
By Theorem \ref{bb}, $T$ is a pseudo Fredholm operator, hence we conclude by \cite[Theorem 4.2]{W} the result.
\end{proof}
 The pseudo B-Fredholm  resolvent set is defined as $\rho_{pBF}(T)=\mathbb{C}\backslash\sigma_{pBF}(T).$
\begin{corollary}\label{ab}
Let $T\in\mathcal{B}(X)$ a pseudo B-Fredholm operator, then the mappings \\

$\lambda\longrightarrow K(T-\lambda)+H_{0}(T-\lambda)$, $\lambda\longrightarrow K(T-\lambda)\cap \overline{H_{0}(T-\lambda)}$
are constant on the components of $\rho_{pBF}(T)$.
\end{corollary}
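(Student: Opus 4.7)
The plan is to bootstrap the preceding lemma — which provides only local constancy near a single point — to global constancy on each connected component via a standard clopen argument. The key observation is that for any $\mu \in \rho_{pBF}(T)$, the shifted operator $T - \mu$ is itself pseudo B-Fredholm, so the lemma applies to $T - \mu$ and yields local constancy of both mappings near $\mu$ (not just near $0$).

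Concretely, I would fix a connected component $G$ of $\rho_{pBF}(T)$, pick a base point $\lambda_0 \in G$, and define
$$U := \{\lambda \in G : K(T-\lambda)+H_0(T-\lambda) = K(T-\lambda_0)+H_0(T-\lambda_0)\}.$$
For each $\mu \in G$, applying the preceding lemma to $T - \mu$ produces an $\varepsilon_\mu > 0$ such that $K(T-\lambda)+H_0(T-\lambda) = K(T-\mu)+H_0(T-\mu)$ for every $\lambda$ with $|\lambda-\mu| < \varepsilon_\mu$. This immediately shows that $U$ is open in $G$: if $\mu \in U$, every point in the $\varepsilon_\mu$-ball around $\mu$ inherits the value $K(T-\lambda_0)+H_0(T-\lambda_0)$. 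The same fact shows that $G \setminus U$ is open, since any $\mu \in G \setminus U$ has a neighborhood on which the mapping is constantly equal to $K(T-\mu)+H_0(T-\mu) \neq K(T-\lambda_0)+H_0(T-\lambda_0)$. Because $G$ is connected and $\lambda_0 \in U$, connectedness forces $U = G$, giving the first constancy claim.

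The second mapping $\lambda \mapsto K(T-\lambda) \cap \overline{H_0(T-\lambda)}$ is handled in exactly the same way, using item~(2) of the preceding lemma in place of item~(1); no new idea is needed.

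There is no real obstacle here — the argument is purely topological once the lemma is in hand. The only point requiring a moment of care is the translation step: verifying that the lemma really does apply to $T - \mu$ for an arbitrary $\mu \in \rho_{pBF}(T)$, which is automatic because "$T - \mu$ is pseudo B-Fredholm" is literally the definition of $\mu \in \rho_{pBF}(T)$. Everything else is the classical open–closed dichotomy on a connected set.
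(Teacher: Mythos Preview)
Your proposal is correct and matches the paper's intent: the paper gives no explicit proof of this corollary, treating it as an immediate consequence of the preceding lemma, and the standard clopen argument you supply is precisely how one passes from local constancy to constancy on connected components. There is nothing to add.
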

\begin{lemma}\label{123}
Let $T$  a pseudo B-Fredholm operator. Then the following statements are equivalent:
\begin{enumerate}
  \item $T$ has the SVEP at $0$,
  \item $\sigma_{ap}(T)$ does not a cluster at $0$.
\end{enumerate}
\end{lemma}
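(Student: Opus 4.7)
The plan is to apply Theorem~\ref{bb} to produce a generalized Kato decomposition $X=M\oplus N$, $T=S\oplus R$, with $S=T_{|M}$ semi-regular and $R=T_{|N}$ quasi-nilpotent. Since $\sigma_{ap}(T)=\sigma_{ap}(S)\cup\sigma_{ap}(R)$ and $T$ has SVEP at $0$ if and only if both $S$ and $R$ do, each direction reduces to a statement about $S$; recall that quasi-nilpotent operators have SVEP everywhere and satisfy $\sigma_{ap}(R)\subseteq\{0\}$.

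For $(1)\Rightarrow(2)$, I would invoke the classical characterization (see e.g.\ \cite[Theorem 2.49]{Aie}) that a semi-regular operator has SVEP at $0$ if and only if it is injective. Combined with the Kato property (closed range), injectivity of $S$ upgrades to bounded below. Then I would appeal to the openness of the class of bounded below operators to obtain $\varepsilon>0$ with $S-\lambda$ bounded below for all $|\lambda|<\varepsilon$. On the other summand $\sigma(R)=\{0\}$ forces $R-\lambda$ to be invertible for every $0<|\lambda|<\varepsilon$. Hence $T-\lambda=(S-\lambda)\oplus(R-\lambda)$ is bounded below on the punctured disk $0<|\lambda|<\varepsilon$, which means $0$ is not a cluster point of $\sigma_{ap}(T)$.

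For $(2)\Rightarrow(1)$, the splitting is not even needed. If $\sigma_{ap}(T)$ does not cluster at $0$, pick $\delta>0$ such that $T-\lambda$ is bounded below (in particular injective) for $0<|\lambda|<\delta$. Given any analytic $f:U\to X$ with $(T-z)f(z)=0$ on a connected neighborhood $U$ of $0$, injectivity forces $f(z)=0$ on $U\cap\{0<|z|<\delta\}$, and then the identity theorem for analytic vector-valued functions yields $f\equiv 0$ on $U$. This is precisely SVEP at $0$.

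The main obstacle is the first direction: it rests on the characterization of SVEP at $0$ for semi-regular operators in terms of injectivity, which is a nontrivial result from local spectral theory but is already in the literature cited by the paper. The rest of the argument consists only of routine perturbation facts for bounded below operators and the openness of the resolvent of a quasi-nilpotent operator away from $0$.
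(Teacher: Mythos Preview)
Your argument is correct and follows essentially the same route as the paper. The only minor difference is that the paper works directly with the pseudo B-Fredholm decomposition $T=T_1\oplus T_2$ (with $T_1$ Fredholm, hence of Kato type) and invokes \cite[Theorem~2.2]{AR} for Kato-type operators, whereas you first pass through Theorem~\ref{bb} to obtain a genuine GKD with a semi-regular summand and then use \cite[Theorem~2.49]{Aie}; both routes reduce the forward implication to the same mechanism (SVEP on the regular part forces it to be bounded below near $0$), and the converse is handled identically by the standard identity-theorem argument.
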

\begin{proof}
Without loss of generality,  we can assume that $\lambda_0=0$.\\
$2)\Rightarrow 1)$  See \cite {Aie}.\\
$1)\Rightarrow 2)$  Suppose that $T$ is a pseudo B-Fredholm operator, then there exists two closed $T$-invariant subspaces $ X_{1} \>\>,  X_{2} \subset X$ such that $ X=X_{1}\oplus X_{2}$, $T_{\shortmid X_{1}}$  is Fredholm,  $T_{\shortmid X_{2}}$ is quasi-nilpotent  and $T=T_{\shortmid X_{1}} \oplus T_{\shortmid X_{2}}$.  Since $T_{\shortmid X_{1}}$  is Fredholm,  then $T_{\shortmid X_{1}}$ is of   Kato type by \cite[Theorem 2.2]{AR}   there exists a constant $\varepsilon >0 $ such that for all $\lambda\in D^*(0,\varepsilon)$,  $\lambda I -T$ is bounded below. Since $T_{\shortmid X_{2}}$ is quasi-nilpotent,  $\lambda I-T$ is bounded below for all $\lambda\neq 0$.  Hence  $\lambda I -T$ is  bounded below  for all $\lambda\in D^*(0,\varepsilon)$. Therefore  $\sigma_{ap}(T)$ does not cluster at $\lambda_0$.

\end{proof}
By duality we have :
\begin{lemma}\label{AA}
Let $T$  a pseudo B-Fredholm operator. Then the following statements are equivalent:
\begin{enumerate}
  \item $T^*$ has the SVEP at $0$,
  \item $\sigma_{su}(T)$ does not a cluster at $0$.
\end{enumerate}
\end{lemma}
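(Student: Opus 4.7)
The plan is to deduce Lemma~\ref{AA} from Lemma~\ref{123} via a standard duality argument, so only three ingredients need to be checked.

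First, I would verify that the class of pseudo B-Fredholm operators is stable under taking adjoints. Indeed, if $T$ is pseudo B-Fredholm with $X = X_1 \oplus X_2$ and $T = T_1 \oplus T_2$, where $T_1$ is Fredholm and $T_2$ is quasinilpotent, then $X^* = X_1^* \oplus X_2^*$ and $T^* = T_1^* \oplus T_2^*$. Now $T_1^*$ is Fredholm because adjoints of Fredholm operators are Fredholm, and $T_2^*$ is quasinilpotent because $\sigma(T_2^*) = \sigma(T_2) = \{0\}$. Hence $T^*$ is itself pseudo B-Fredholm.

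Second, applying Lemma~\ref{123} to the pseudo B-Fredholm operator $T^*$ at $\lambda_0 = 0$ yields the equivalence
\[
T^{**} \text{ has the SVEP at } 0 \iff \sigma_{ap}(T^*) \text{ does not cluster at } 0.
\]
Since SVEP at a point of $T^*$ is exactly the statement we want on the left-hand side, and since the identity $\sigma_{ap}(T^*) = \sigma_{su}(T)$ is standard (dual pairing between approximate point and surjectivity spectra), substitution gives the desired equivalence. More directly, one can simply apply Lemma~\ref{123} to $T^*$ read as an operator on $X^*$; the same proof scheme (using that $T_1^*$ is of Kato type and $T_2^*$ is quasinilpotent, with $\lambda I - T^*$ bounded below outside a punctured disc around $0$) works verbatim and produces $\sigma_{ap}(T^*)$, which equals $\sigma_{su}(T)$.

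I do not expect any genuine obstacle here; the only point requiring a line of justification is the stability of the pseudo B-Fredholm property under adjoints, and even that is an immediate consequence of the classical facts that Fredholmness and the spectrum are preserved by adjunction. The rest is bookkeeping through the identification $\sigma_{ap}(T^*) = \sigma_{su}(T)$.
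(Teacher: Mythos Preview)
Your approach is exactly the duality argument the paper invokes (the paper simply writes ``By duality we have'' and states the lemma without further proof): pass to $T^*$, note it is again pseudo B-Fredholm, apply Lemma~\ref{123}, and use $\sigma_{ap}(T^*)=\sigma_{su}(T)$. One slip to fix: in your displayed equivalence you wrote $T^{**}$ on the left, but Lemma~\ref{123} applied to $T^*$ gives \emph{$T^*$} has the SVEP at $0$ $\iff$ $\sigma_{ap}(T^*)$ does not cluster at $0$, which is already the desired statement---no bidual is needed (and your own next sentence confirms you meant $T^*$).
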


\begin{theorem}\label{abb}
Let $T\in \mathcal{B}(X)$ and  $\Omega$ a component of $\rho_{pBF}(T)$. Then the following alternative holds:
\begin{enumerate}
  \item $T$ has the SVEP for every point of $\Omega$. In this case, $\sigma_{ap}(T)$ does not have limit points in $\Omega$, every point of $\Omega$ is not an eigenvalue of $T$ execpt a subset of $\Omega$ which consists of at most countably many isolated points.
  \item $T$ has the SVEP at no point of $\Omega$. In this case, every point of $\Omega$ is an eigenvalue of $T$.
\end{enumerate}
\end{theorem}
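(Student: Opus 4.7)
The plan is to split $\Omega$ into the two sets $U=\{\lambda\in\Omega:T\text{ has SVEP at }\lambda\}$ and $V=\Omega\setminus U$, and to prove that each of $U$ and $V$ is open in $\Omega$. Since $\Omega$ is connected, one of them must be empty, and this gives the alternative.

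For openness of $U$, I would apply Lemma \ref{123} to $T-\lambda_0$ (which is pseudo B-Fredholm because $\lambda_0\in\Omega\subseteq\rho_{pBF}(T)$): if $T$ has SVEP at $\lambda_0$ then $\sigma_{ap}(T)$ does not cluster at $\lambda_0$, so on some punctured disk around $\lambda_0$ the operator $T-\lambda$ is bounded below and in particular injective. Injectivity implies SVEP, so together with $\lambda_0$ itself a whole disk lies in $U$. For openness of $V$, I would unpack the definition directly: if $T$ fails SVEP at $\lambda_0$, fix a nonzero analytic $f\colon D\to X$ on some open disk $D\ni\lambda_0$ with $(T-z)f(z)\equiv 0$; for any $\lambda_1\in D$, Taylor-expand at $\lambda_1$ and factor $f(z)=(z-\lambda_1)^k h(z)$ with $h$ analytic on $D$ and $h(\lambda_1)\neq 0$. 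The identity $(z-\lambda_1)^k(T-z)h(z)=0$ then forces $(T-z)h(z)=0$ throughout $D$, so $h$ witnesses failure of SVEP at $\lambda_1$, and $D\subseteq V$.

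The rest of the statement follows quickly. In Case~(1), Lemma \ref{123} applied at each point of $\Omega$ shows that $\sigma_{ap}(T)$ has no limit point in $\Omega$; thus $\sigma_{ap}(T)\cap\Omega$ is a closed discrete subset of the second-countable space $\Omega$, hence at most countable, and the eigenvalues (being contained in $\sigma_{ap}(T)$) form a further subset of isolated points. In Case~(2), for every $\lambda\in\Omega$ the function $h$ produced in the openness argument satisfies $(T-\lambda)h(\lambda)=0$ with $h(\lambda)\neq 0$, so $\lambda$ is an eigenvalue with eigenvector $h(\lambda)$.

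The main obstacle I anticipate is the openness of $V$: one must check that even when $f(\lambda_1)=0$, dividing out the zero still produces an analytic witness on the \emph{same} disk $D$, so that SVEP truly fails at $\lambda_1$. This step uses only the identity principle for analytic functions and a Taylor expansion, not the pseudo B-Fredholm hypothesis, but it is what makes the dichotomy sharp and forces every point of $\Omega$ to be an eigenvalue in Case~(2).
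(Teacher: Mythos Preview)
Your argument is correct, but it follows a different path from the paper. The paper establishes the dichotomy via Corollary~\ref{ab}: for a pseudo-Fredholm operator $T-\lambda$, Aiena's characterization says SVEP at $\lambda$ is equivalent to $K(T-\lambda)\cap\overline{H_{0}(T-\lambda)}=\{0\}$, and since this intersection is constant on each component $\Omega$, SVEP holds either everywhere or nowhere on $\Omega$; Case~(2) is then handled by citing that failure of SVEP at $\lambda$ forces $N(T-\lambda)\neq\{0\}$. Your route is more direct and self-contained: you avoid both Corollary~\ref{ab} and the $K\cap\overline{H_{0}}$ machinery entirely, using only Lemma~\ref{123} to make $U$ open and a bare-hands identity-theorem argument (valid for any $T$, pseudo B-Fredholm or not) to make $V$ open; the same factorization $f(z)=(z-\lambda)^k h(z)$ then hands you the eigenvector in Case~(2). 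The paper's approach highlights the structural invariants $K$ and $H_{0}$, which feed into the surrounding results; yours is shorter and shows that the only place the pseudo B-Fredholm hypothesis is actually needed is in the passage from SVEP to ``$\sigma_{ap}$ does not cluster'' (Lemma~\ref{123}).
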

\begin{proof}
 $1)$ Assume that $T$ has the SVEP at $\lambda_{0}\in \Omega$. By \cite[Theorem 3.14]{Aie} we have $K(T-\lambda_{0}) \cap H_{0}(T-\lambda_{0})=K(T-\lambda_{0})\cap\overline{H_{0}(T-\lambda_{0})}=\{0\}$. According to corollary \ref{ab}, we have $K(T-\lambda_{0})\cap\overline{H_{0}(T-\lambda_{0})}=\{0\}=K(T-\lambda)\cap\overline{H_{0}(T-\lambda)}=\{0\}$ for all $\lambda\in\Omega$. Hence $K(T-\lambda)+\overline{H_{0}(T-\lambda)}=\{0\}$ and therefor $T$ has the SVEP at every $\lambda\in \Omega$. By Lemma \ref{123}, $\sigma_{ap}(T)$ does not cluster at any  $\lambda\in\Omega$. Consequently every point of $\Omega$ is not an eigenvalue of $T$ execpt a subset of $\Omega$ which consists of at most countably many isolated points.\\
$2)$ Suppose that $T$ has the SVEP at not point of $\Omega$. From \cite[Theorem 2.22]{Aie}, we have $N(T-\lambda) \neq \{0\}$, for all $\lambda \in \Omega$,  hence every point of $\Omega$ is an eigenvalue of $T$.
\end{proof}

\begin{theorem}\label{aab}
Let $T\in \mathcal{B}(X)$ and  $\Omega$ a component of $\rho_{pBF}(T)$. Then the following alternative holds:
\begin{enumerate}
  \item $T^*$ has the SVEP for every point of $\Omega$. In this case, $\sigma_{su}(T)$ does not have limit points in $\Omega$, every point of $\Omega$ is not  a deficiency value  of $T$ execpt a subset of $\Omega$ which consists of at most countably many isolated points.
  \item $T^*$ has the SVEP at no point of $\Omega$. In this case, every point of $\Omega$ is  a deficiency value of $T$.
\end{enumerate}
\end{theorem}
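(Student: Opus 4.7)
The plan is to mirror the proof of Theorem \ref{abb} by duality, replacing $T$ with $T^*$, Lemma \ref{123} with Lemma \ref{AA}, and the characterization of SVEP of $T$ via $K\cap \overline{H_0}=\{0\}$ with its dual. Since $T$ is pseudo B-Fredholm, Theorem \ref{bb} ensures it is pseudo Fredholm, so the following dual characterization applies (see e.g.\ \cite[Theorem 3.15]{Aie} and its dual version): $T^*$ has the SVEP at $\lambda_{0}$ if and only if $K(T-\lambda_{0})+H_{0}(T-\lambda_{0})=X$.

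For part (1), I would assume $T^*$ has the SVEP at some $\lambda_{0}\in\Omega$. The dual characterization then gives $K(T-\lambda_{0})+H_{0}(T-\lambda_{0})=X$. Corollary \ref{ab} tells us that the mapping $\lambda\mapsto K(T-\lambda)+H_{0}(T-\lambda)$ is constant on the component $\Omega$, so we obtain $K(T-\lambda)+H_{0}(T-\lambda)=X$ for every $\lambda\in\Omega$. Applying the characterization in the reverse direction, $T^*$ has the SVEP at every point of $\Omega$. Lemma \ref{AA} then yields that $\sigma_{su}(T)$ does not cluster at any $\lambda\in\Omega$, so $\sigma_{su}(T)\cap\Omega$ consists of isolated points and is therefore at most countable. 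Since every deficiency value of $T$ lies in $\sigma_{su}(T)$, this produces the desired description.

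For part (2), I would suppose that $T^*$ has the SVEP at no point of $\Omega$ and invoke the dual of \cite[Theorem 2.22]{Aie}: the failure of the SVEP of $T^*$ at $\lambda$ forces $N(T^*-\lambda)\neq\{0\}$, equivalently $R(T-\lambda)\neq X$. Hence every $\lambda\in\Omega$ is a deficiency value of $T$.

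The main obstacle, and the only nontrivial point beyond quoting Lemma \ref{AA} and Corollary \ref{ab}, is justifying the equivalence ``$T^*$ has SVEP at $\lambda_{0}\Longleftrightarrow X=K(T-\lambda_{0})+H_{0}(T-\lambda_{0})$''; this rests on the existence of a generalized Kato decomposition for $T-\lambda_{0}$, which is precisely what Theorem \ref{bb} furnishes. Once this equivalence is in place, the rest of the argument is a formal translation of the proof of Theorem \ref{abb} across the duality.
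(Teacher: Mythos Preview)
Your proposal is correct and follows essentially the same route as the paper: for part (1) both use \cite[Theorem 3.15]{Aie} to get $K(T-\lambda_0)+H_0(T-\lambda_0)=X$, then the constancy on components from Corollary \ref{ab}, and finally Lemma \ref{AA}; for part (2) the paper argues by contrapositive (if $T-\lambda_0$ were surjective then $T^*-\lambda_0$ would be injective and hence $T^*$ would have the SVEP at $\lambda_0$), which is exactly the content of your ``dual of \cite[Theorem 2.22]{Aie}''. Your explicit remark that Theorem \ref{bb} is what places $T-\lambda_0$ in the GKD class required by \cite[Theorem 3.15]{Aie} is a clarification the paper leaves implicit.
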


\begin{proof}
 $1)$ Assume that $T^*$ has the SVEP at $\lambda_{0}\in \Omega$, by \cite[Theorem 3.15]{Aie} we have $K(T-\lambda_{0})+H_{0}(T-\lambda_{0})=X$. According to corollary \ref{ab}, we have $K(T-\lambda_{0})+\overline{H_{0}(T-\lambda_{0})}= K(T-\lambda)+\overline{H_{0}(T-\lambda)}=X$ for all $\lambda\in\Omega$. Hence $K(T-\lambda)+\overline{H_{0}(T-\lambda)}=X$ and therefor $T$ has the SVEP at every $\lambda\in \Omega$. By lemma \ref{AA}, $\sigma_{su}(T)$ does not cluster at any  $\lambda\in\Omega$. Consequently every point of $\Omega$ is not a deficiency value of $T$ execpt a subset of $\Omega$ which consists of at most countably many isolated points.\\
$2)$ Suppose that $T^*$ has the SVEP at no point of $\Omega$. Assume that there exists a $\lambda_{0}\in \Omega$ such that $T-\lambda$ is surjective, then $T^*-\lambda_{0}$ is injective this  implies that  $T^*$ has the SVEP at $\lambda_{0}$. Contraduction and  hence every point of $\Omega$ is a deficiency value of $T$.
\end{proof}
\begin{remark}
We have $\sigma_{pBF}(T)\subset\sigma_{gD}(T)$, this inclusion is proper. Indeed: Consider the operator $T$ defined  in $l^2(\mathbb{N})$ by
$$T(x_{1},x_{2},....)=(0,x_{1},x_{2},...),~~~  T^*(x_{1},x_{2},....)=(x_{2},x_{3},...).$$
Let $B=T\oplus T^*$. Then $\sigma_{gD}(T)=\{\lambda\in \mathbb{C}; |\lambda|\leq 1\}$ and we have $0\notin\sigma_{pBF}(T)$. This shows that the inclusion $\sigma_{pBF}(T)\subset\sigma_{gD}(T)$ is proper.
\end{remark}
Next we obtain a condition on an operator such that its pseudo B-Fredholm spectrum  coincide with the generalized Drazin
spectrum.
\begin{theorem}
Suppose that $ T\in \mathcal{B}(X)$ and $\rho_{pBF}(T)$ has only one component.
Then $$\sigma_{pBF}(T)=\sigma_{gD}(T)$$
\end{theorem}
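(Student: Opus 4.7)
The plan is to establish the reverse inclusion $\sigma_{gD}(T)\subset\sigma_{pBF}(T)$, since the other inclusion is already noted. Equivalently, I want to show that every $\lambda_0\in\rho_{pBF}(T)$ lies in $\rho_{gD}(T)$, i.e.\ $\lambda_0\notin\mathrm{acc}\,\sigma(T)$, which by Koliha's characterization yields generalized Drazin invertibility of $T-\lambda_0$.

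First I would observe that the single component $\Omega:=\rho_{pBF}(T)$ meets the ordinary resolvent set: indeed $\rho(T)\subset\rho_{pBF}(T)=\Omega$ and $\rho(T)\neq\varnothing$ (because $\sigma(T)$ is bounded). Pick any $\mu\in\rho(T)\cap\Omega$; both $T$ and $T^*$ have SVEP at $\mu$. This places the component $\Omega$ into alternative~(1) of Theorem~\ref{abb} \emph{and} into alternative~(1) of Theorem~\ref{aab} simultaneously (alternative~(2) is impossible at $\mu$ since $\mu$ is not an eigenvalue and $T-\mu$ is surjective). Consequently, $\sigma_{ap}(T)$ has no limit point in $\Omega$ and $\sigma_{su}(T)$ has no limit point in $\Omega$.

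Next I would use the standard identity $\sigma(T)=\sigma_{ap}(T)\cup\sigma_{su}(T)$ to deduce that $\sigma(T)$ has no limit point in $\Omega$. Therefore, for any $\lambda_0\in\Omega$, $\lambda_0\notin\mathrm{acc}\,\sigma(T)$. By Koliha's characterization (recalled in the preliminaries: $T-\lambda_0$ is generalized Drazin invertible iff $\lambda_0\notin\mathrm{acc}\,\sigma(T)$), we conclude $\lambda_0\in\rho_{gD}(T)$, which gives $\rho_{pBF}(T)\subset\rho_{gD}(T)$, i.e.\ $\sigma_{gD}(T)\subset\sigma_{pBF}(T)$. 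Combined with the trivial inclusion $\sigma_{pBF}(T)\subset\sigma_{gD}(T)$ (every generalized Drazin decomposition $T=T_1\oplus T_2$ with $T_1$ invertible and $T_2$ quasinilpotent is, in particular, a pseudo B-Fredholm decomposition), this yields the desired equality.

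The only delicate point is ruling out the SVEP-free alternative in Theorems~\ref{abb} and~\ref{aab}; that is handled by the single observation that $\Omega$ contains at least one point of $\rho(T)$, where SVEP for both $T$ and $T^*$ is automatic. If $\rho_{pBF}(T)$ had more than one component, a component disjoint from $\rho(T)$ could fall into the SVEP-free alternative, which is precisely why the one-component hypothesis is essential and why the proper inclusion exhibited in the remark just before the theorem is consistent with this result.
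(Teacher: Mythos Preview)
Your proof is correct and follows essentially the same route as the paper's: both arguments use that the unique component $\Omega=\rho_{pBF}(T)$ contains $\rho(T)$, invoke the SVEP dichotomy (Theorems~\ref{abb} and~\ref{aab}) to force alternative~(1) for both $T$ and $T^*$, and then conclude via Lemmas~\ref{123} and~\ref{AA} (equivalently, via $\sigma(T)=\sigma_{ap}(T)\cup\sigma_{su}(T)$) that $\sigma(T)$ does not cluster in $\Omega$, so $\Omega\subset\rho_{gD}(T)$. Your write-up is in fact slightly more explicit than the paper's about why $\rho(T)\neq\varnothing$ and about the trivial inclusion $\sigma_{pBF}(T)\subset\sigma_{gD}(T)$.
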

\begin{proof}
$\rho_{pBF}(T)$ has only one component, then $\rho_{pBF}(T)$ is the unique component. Since $T$ has the SVEP on $\rho(T)\subset\rho_{pBF}(T)$. By Theorem \ref{abb}, $T$ has the SVEP on $\rho_{pBF}(T)$.
Similar $T^*$  also has the SVEP on $\rho_{pBF}(T)$ by  Theorem \ref{aab}. ( This since $\rho(T^*)=\rho(T)\subset\rho_{pBF}(T)$).
From Lemma \ref{123}  and  Lemma \ref{AA}, $\sigma(T)$ does not cluster at any $\lambda\in \rho_{pBF}(T).$
Therefor $\rho_{pBF}(T)\subset iso\sigma(T)\cup\rho(T)=\rho_{gD}(T)$, hence $\rho_{pBF}(T)=\rho_{gD}(T)$.
\end{proof}

\section{ Symmetric difference for pseudo B-Fredholm spectrum }

\noindent Let in the following we give symmetric difference between
$\sigma_{pBF}(T)$ and other parts of the spectrum. Denoted by $\rho_{fK}(T)=\{\lambda\in\mathbb{C}, K(T-\lambda)\,\,\, is\,\,not\,\, closed\}$, $\sigma_{fK}(T)=\mathbb{C}\setminus\rho_{fK}(T)$ and  $\rho_{cr}(T)=\{\lambda\in\mathbb{C}, R(T-\lambda)\,\,\, is\,\, closed\}$,
$\sigma_{cr}(T)=\mathbb{C}\setminus\rho_{cr}(T)$ the Goldberg spectrum.
Most of the classes of operators, for example, in Fredholm theory, require that the
operators have closed ranges. Thus, it is natural to consider the closed-range spectrum or
Goldberg spectrum of an operator.
\begin{proposition}\label{asc}
If $\lambda\in \sigma_{*}(T)$ is non- isolated point then $\lambda\in\sigma_{pBF}(T)$, where $*\in\{fK, cr\}$.
\end{proposition}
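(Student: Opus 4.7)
The plan is to prove the contrapositive: if $\lambda \notin \sigma_{pBF}(T)$, then $\lambda$ is either outside $\sigma_*(T)$ or is an isolated point of it. Without loss of generality I take $\lambda = 0$ and assume $T$ is pseudo B-Fredholm. The goal is to produce a punctured neighborhood $0 < |\mu| < \varepsilon$ on which both $R(T-\mu)$ and $K(T-\mu)$ are closed, which immediately forces $0$ to be isolated in $\sigma_{cr}(T)$ and in $\sigma_{fK}(T)$.

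First I invoke Theorem \ref{bb} to reduce to the pseudo Fredholm setting: there is a decomposition $X = M \oplus N$ with $T = T_1 \oplus T_2$, where $T_1 := T_{|M}$ is semi-regular (Kato) and $T_2 := T_{|N}$ is quasi-nilpotent. Since the class of semi-regular operators is open in $\mathcal{B}(M)$, there exists $\varepsilon > 0$ such that $T_1 - \mu$ is Kato for every $|\mu| < \varepsilon$; in particular each $T_1 - \mu$ has closed range. For $0 < |\mu| < \varepsilon$ the restriction $T_2 - \mu$ is invertible, because $\sigma(T_2) = \{0\}$. Writing $T - \mu = (T_1 - \mu)\oplus(T_2-\mu)$ on $M \oplus N$, the range $R(T-\mu) = R(T_1-\mu) \oplus N$ is therefore closed.

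Next I handle the analytic core. Using the fact that for a semi-regular operator $S$ one has $K(S) = R^{\infty}(S)$ and this subspace is closed (the ranges $R(S^n)$ being all closed for Kato $S$, their intersection is closed), each $K(T_1 - \mu)$ is closed for $|\mu|<\varepsilon$. For $0 < |\mu| < \varepsilon$, invertibility of $T_2 - \mu$ gives $K(T_2 - \mu) = N$. Since $K(\cdot)$ respects the direct sum decomposition, $K(T - \mu) = K(T_1 - \mu) \oplus N$ is closed. Combining both observations, $\mu \notin \sigma_{cr}(T)\cup\sigma_{fK}(T)$ for every $\mu$ in the punctured disk $0 < |\mu| < \varepsilon$, so $0$ is isolated in $\sigma_*(T)$ (when it belongs to it at all), yielding the contrapositive.

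The routine part is the algebra of the direct sum; the genuine content I will need to cite or briefly justify is the two stability facts for Kato operators, namely openness of the Kato class and closedness of $K(S) = R^{\infty}(S)$ for semi-regular $S$. This is the only point where I anticipate having to be careful, since the statement mixes two different spectra into one argument and the conclusion rests entirely on propagating the semi-regular structure of $T_1$ to nearby points while exploiting the triviality of $T_2$ away from zero.
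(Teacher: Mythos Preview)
Your proof is correct and follows essentially the same contrapositive route as the paper: both show that if $T-\lambda$ is pseudo B-Fredholm then $T-\mu$ is semi-regular on a punctured disk about $\lambda$, hence has closed range and closed analytic core there, forcing $\lambda$ to be isolated in $\sigma_*(T)$. The only difference is that you unpack the punctured-disk semi-regularity explicitly via Theorem~\ref{bb} and the direct-sum decomposition, whereas the paper simply cites a lemma from \cite{W} for this step.
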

\begin{proof}
Let $\lambda\in\sigma_{*}(T)$  an isolated point. Suppose that $T-\lambda$ is a pseudo B-Fredholm, by Lemma \ref{aaa}  there exists a constant $\varepsilon >0 $ such that for all $\lambda\in D^*(\lambda,\varepsilon)$,  $\lambda-  T$ is semi regular. Then $R(T-\mu)$  and $K(T-\mu)$ are closed for all $\mu\in D^*(\lambda,\varepsilon)$, then $\lambda$ is a isolated point of $\sigma_{*}(T)$, contradiction.
\end{proof}

\begin{corollary}
$\sigma_{*}(T)\backslash\sigma_{pBF}(T)$ is at most countable, where  $*\in\{fK, cr\}$.
\end{corollary}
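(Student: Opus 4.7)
The plan is to derive this as a direct contrapositive of Proposition \ref{asc} combined with the standard fact that a set of isolated points in the complex plane is at most countable.

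First I would unpack the statement of Proposition \ref{asc}: it says that every non-isolated point of $\sigma_*(T)$ lies in $\sigma_{pBF}(T)$. Taking the contrapositive, if $\lambda \in \sigma_*(T)$ and $\lambda \notin \sigma_{pBF}(T)$, then $\lambda$ must be an isolated point of $\sigma_*(T)$. Equivalently, $\sigma_*(T) \setminus \sigma_{pBF}(T) \subseteq \operatorname{iso} \sigma_*(T)$.

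Next I would appeal to the topological fact that the set of isolated points of any subset of a second-countable (hence separable) space is at most countable. Concretely, around each isolated point $\lambda$ of $\sigma_*(T)$ one can choose an open disc in $\mathbb{C}$ that meets $\sigma_*(T)$ only at $\lambda$, and by picking such discs with rational center and radius one obtains an injection from $\operatorname{iso} \sigma_*(T)$ into a countable set.

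Combining the two observations gives $\sigma_*(T) \setminus \sigma_{pBF}(T)$ is at most countable, for both $* = fK$ and $* = cr$ uniformly, since Proposition \ref{asc} covers both cases. There is no real obstacle here; the only subtlety worth stating explicitly is the reduction to isolated points, which is entirely carried by Proposition \ref{asc}, so the corollary is essentially a one-line consequence.
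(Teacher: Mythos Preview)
Your argument is correct and matches the paper's intended approach: the paper states this corollary with no proof, treating it as an immediate consequence of Proposition~\ref{asc}, and your contrapositive reading together with the countability of isolated points in $\mathbb{C}$ is exactly the one-line deduction implied.
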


\begin{proposition}
Let $T\in \mathcal{B}(X)$ such that   $\sigma_{cr}(T)=\sigma(T)$ and every $\lambda$ is non-isolated in $\sigma(T)$. Then
\begin{center}
    $\sigma(T)=\sigma_{cr}(T)=\sigma_{pBF}(T)=\sigma_{pBW}(T)=\sigma_{e}(T)=\sigma_{K}(T)=\sigma_{ap}(T)$
\end{center}
\end{proposition}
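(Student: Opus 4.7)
The plan is to establish the chain of equalities by a squeeze argument using standard inclusions between the listed spectra together with Proposition \ref{asc}. The key structural observation is that each of $\sigma_K(T)$, $\sigma_e(T)$, $\sigma_{ap}(T)$, $\sigma_{pBF}(T)$, $\sigma_{pBW}(T)$ sits between $\sigma_{cr}(T)$ (or its pseudo B-Fredholm analogue) and $\sigma(T)$, so collapsing the outer bounds forces all intermediate ones to collapse as well.

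First I would record the trivial inclusions $\sigma_{cr}(T) \subseteq \sigma_K(T)$ (semi-regularity requires closed range), $\sigma_{cr}(T) \subseteq \sigma_e(T)$ (Fredholm requires closed range), and $\sigma_{cr}(T) \subseteq \sigma_{ap}(T)$ (bounded below implies closed range), together with $\sigma_K(T), \sigma_e(T), \sigma_{ap}(T), \sigma_{pBF}(T), \sigma_{pBW}(T) \subseteq \sigma(T)$. Under the standing hypothesis $\sigma_{cr}(T) = \sigma(T)$, the squeeze immediately gives
\[
\sigma_{cr}(T) = \sigma_K(T) = \sigma_e(T) = \sigma_{ap}(T) = \sigma(T),
\]
which disposes of all equalities except the two involving $\sigma_{pBF}(T)$ and $\sigma_{pBW}(T)$.

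For those remaining two, I would invoke Proposition \ref{asc}: since every $\lambda \in \sigma(T) = \sigma_{cr}(T)$ is assumed non-isolated in $\sigma(T)$, hence non-isolated in $\sigma_{cr}(T)$, the proposition yields $\lambda \in \sigma_{pBF}(T)$. Thus $\sigma(T) \subseteq \sigma_{pBF}(T)$, and combined with the general inclusion $\sigma_{pBF}(T) \subseteq \sigma_{pBW}(T) \subseteq \sigma(T)$ recorded earlier in the paper, we obtain $\sigma_{pBF}(T) = \sigma_{pBW}(T) = \sigma(T)$. Concatenating with the previous paragraph completes the proof.

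The argument is essentially a bookkeeping exercise on inclusions; the only real content is Proposition \ref{asc}, which was already proved using the semi-regular perturbation stability provided by Lemma \ref{aaa}. The mildest subtlety to watch is to apply the non-isolation hypothesis with respect to $\sigma(T)$ and then transfer it to $\sigma_{cr}(T)$ via the hypothesis $\sigma_{cr}(T) = \sigma(T)$, so that Proposition \ref{asc} applies verbatim; no further obstacle arises.
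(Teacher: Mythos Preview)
Your proof is correct and follows essentially the same squeeze argument as the paper's own proof: both rely on Proposition \ref{asc} to force $\sigma_{cr}(T)\subseteq\sigma_{pBF}(T)$ under the non-isolation hypothesis, and then sandwich the remaining spectra via standard inclusions. The only cosmetic difference is that the paper places $\sigma_e(T)$ in the chain $\sigma_{pBW}(T)\subseteq\sigma_e(T)\subseteq\sigma(T)$, whereas you handle $\sigma_e(T)$ directly via $\sigma_{cr}(T)\subseteq\sigma_e(T)$; both routes are equally valid.
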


\begin{proof}
Since every $\lambda\in\sigma(T)=\sigma_{cr}(T)$ is non-isolated then by Proposition \ref{asc},  we have $\sigma(T)=\sigma_{cr}(T)\subseteq \sigma_{pBF}(T)\subseteq\sigma_{pBW}(T)\subseteq\sigma_{e}(T)\subseteq\sigma(T)$ and since
$\sigma(T)=\sigma_{cr}(T)\subseteq \sigma_{K}(T)\subseteq\sigma_{ap}(T)\subseteq\sigma(T)$,  we deduce the statement of the theorem.
\end{proof}

\begin{proposition}
The symmetric difference $\sigma_{K}(T)\Delta\sigma_{pBF}(T)$   is at most countable.
\end{proposition}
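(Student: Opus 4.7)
The plan is to split the symmetric difference into its two asymmetric halves,
\[
\sigma_K(T)\Delta\sigma_{pBF}(T) \;=\; \bigl(\sigma_K(T)\setminus\sigma_{pBF}(T)\bigr)\,\cup\,\bigl(\sigma_{pBF}(T)\setminus\sigma_K(T)\bigr),
\]
and to bound each half separately by showing it consists of at most countably many points.

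For the first half I will use that $\sigma_K(T)\setminus\sigma_{pBF}(T)=\rho_{pBF}(T)\setminus\rho_K(T)$, which Corollary \ref{1} already asserts is at most countable. The same fact can be rephrased in the spirit of Proposition \ref{asc}: any $\lambda_0\in\sigma_K(T)\cap\rho_{pBF}(T)$ is necessarily isolated in $\sigma_K(T)$, since Theorem \ref{bb} places $\lambda_0$ in $\rho_{GK}(T)$, and then Lemma \ref{aaa} combined with openness of $\rho_{GK}(T)$ yields a punctured disk $D^*(\lambda_0,\varepsilon)\subset\rho_K(T)$.

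For the second half, $\sigma_{pBF}(T)\setminus\sigma_K(T)=\rho_K(T)\setminus\rho_{pBF}(T)$, I will attempt the symmetric argument: for each $\lambda_0$ in this set, produce a punctured disk around $\lambda_0$ sitting inside $\rho_{pBF}(T)$, so that $\lambda_0$ is isolated in $\sigma_{pBF}(T)$ and hence only countably many such points can occur. Openness of $\rho_K(T)$ delivers $D(\lambda_0,\varepsilon)\subset\rho_K(T)$, on which every $T-\mu$ is semi-regular. I will then invoke the classical constancy of $\dim N(T-\mu)$ and of the codimension of $R(T-\mu)$ on each connected component of $\rho_K(T)$ to argue that $T-\mu$ is in fact Fredholm throughout this disk, and therefore pseudo B-Fredholm.

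The hard part will be justifying the finiteness of $\dim N(T-\mu)$ and of the codimension of $R(T-\mu)$ on the connected component through $\lambda_0$. If either is infinite, the whole component lies in $\rho_K(T)\cap\sigma_{pBF}(T)$ and the isolation argument collapses outright. To rule this out, I expect to need the pseudo B-Fredholm decomposition $T-\lambda_0=T_1\oplus T_2$ at a nearby pseudo B-Fredholm point (with $T_1$ Fredholm and $T_2$ quasinilpotent on $X=X_1\oplus X_2$), and to compare it carefully with the semi-regular structure of $T-\mu$ across the disk, in order to force the relevant dimensions to be finite. This interaction between the two decompositions is the main obstacle I expect, and it is where the substance of any correct proof will sit.
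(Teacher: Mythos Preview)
Your treatment of the first half, $\sigma_K(T)\setminus\sigma_{pBF}(T)=\rho_{pBF}(T)\setminus\rho_K(T)$, is correct and matches the paper's use of Corollary~\ref{1}.

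The second half, however, has a genuine gap that your own remarks already flag but do not resolve. You aim to show that each $\lambda_0\in\rho_K(T)\cap\sigma_{pBF}(T)$ is isolated in $\sigma_{pBF}(T)$ by proving that $T-\mu$ is Fredholm on a punctured disk about $\lambda_0$. But the very constancy you invoke kills this: on the connected component of $\rho_K(T)$ containing $\lambda_0$, the dimensions $\dim N(T-\mu)$ and $\operatorname{codim} R(T-\mu)$ are constant, and since $T-\lambda_0$ is \emph{not} Fredholm (Fredholm would force pseudo B-Fredholm, contradicting $\lambda_0\in\sigma_{pBF}(T)$), \emph{no} point of that component is Fredholm. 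Your fallback---borrowing a pseudo B-Fredholm decomposition from a ``nearby pseudo B-Fredholm point''---is circular, because whether such nearby points exist is exactly what you are trying to establish.

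The paper avoids this trap entirely with a one-line inclusion argument. Since every Fredholm operator is trivially pseudo B-Fredholm (take $X_2=\{0\}$), one has $\sigma_{pBF}(T)\subseteq\sigma_e(T)$, hence
\[
\sigma_{pBF}(T)\setminus\sigma_K(T)\ \subseteq\ \sigma_e(T)\setminus\sigma_K(T),
\]
and the right-hand side is classically known to consist of at most countably many isolated points (the paper cites \cite[Theorem~1.65]{Aie}). This handles the second half immediately, with no need to analyze components of $\rho_K(T)$ or compare decompositions.
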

\begin{proof}
By corollary \ref{1},  $\sigma_{K}(T)\backslash\sigma_{pBF}(T)$ is at most countable.
We have $\sigma_{e}(T)\backslash\sigma_{K}(T)$ consists of at most countably many isolated points (see \cite[Theorem 1.65]{Aie} and
$\sigma_{pBF}(T)\backslash\sigma_{K}(T)\subseteq\sigma_{e}(T)\backslash\sigma_{K}(T)$, hence $\sigma_{pBF}(T)\backslash\sigma_{K}(T)$ is at most countable. Since\\ $$\sigma_{K}(T)\Delta\sigma_{pBF}(T)=(\sigma_{K}(T)\backslash\sigma_{pBF}(T))\bigcup (\sigma_{pBF}(T)\backslash\sigma_{K}(T))$$ Therefore $\sigma_{K}(T)\Delta\sigma_{pBF}(T)$   is at most countable.
\end{proof}


\end{document}